\newtheorem{theorem}{Theorem}[section]
\newtheorem{proposition}[theorem]{Proposition}
\newtheorem{corollary}[theorem]{Corollary}
\theoremstyle{definition}
\newtheorem{example}[theorem]{Example}
\newtheorem*{conjecture}{Conjecture} 
\newtheorem*{question}{Question}
\numberwithin{equation}{section}
\newcommand{\mbb}{\mathbb}
\newcommand{\mbf}{\mathbf}
\newcommand{\set}[1]{\left\{ #1 \right\}}
\newcommand{\R}{\mathbb{R}}
\newcommand{\Z}{\mathbb{Z}}
\newcommand{\N}{\mathbb{N}}
\newcommand{\f}{\infty}
\newcommand{\sse}{\subseteq}
\newcommand{\D}{\;\mathrm{d}}
\newcommand{\red}[1]{{\color{red}#1}}
\title[Existence and spectrality]{Existence  and spectrality of infinite convolutions generated by infinitely many admissible pairs}
\author[J. J. Miao]{Jun Jie Miao}
\address[J. J. Miao]{School of Mathematical Sciences, Shanghai Key Laboratory of PMMP, East China Normal University, Shanghai 200241, People's Republic of China}
\email{jjmiao@math.ecnu.edu.cn}
\author[H. Zhao]{Hongbo Zhao}
\address[H. Zhao]{School of Mathematical Sciences, Shanghai Key Laboratory of PMMP, East China Normal University, Shanghai 200241,
	People's Republic of China}
\email{2504245357@qq.com}
\date{\today}
\subjclass[2010]{28A80, 42C30, 60B10}
\begin{document}
\keywords{empty}
	
\maketitle
\begin{abstract}
In this paper, we study the spectrality of infinite convolutions generated by infinitely many admissible pairs which may not be compactly  supported, where the spectrality means the corresponding square integrable function space admits a family of exponential functions as an orthonormal basis.

First, we introduce remainder bounded condition (RBC), and we show it is a sufficient condition for  the existence of  the infinite convolution. Then we prove that the infinite convolution generated by a sequence of admissible pairs is a  spectral measure if the sequence of admissible pairs  satisfies RBC, and it has a subsequence consisting of general consecutive sets. Next, we show that the subsequence of general consecutive sets may be replaced by a general assumption, named partial concentration condition (PCC). Finally, we investigate the infinite convolutions generated by special subsequences,  and we give sufficient conditions for the spectrality  of such infinite convolutions.
\end{abstract}

\section{Introduction}
\subsection{Spectral measures and iterated function systems}
A Borel probability measure $\mu$ on $\R^d$ is called a \emph{spectral measure} if there exists a countable subset $\Lambda \sse \R^d$ such that the set of exponential functions $\set{e_\lambda(x) = e^{-2\pi i \lambda \cdot x}: \lambda \in \Lambda}$ forms an orthonormal basis in $L^2(\mu)$, and the set $\Lambda$ is often called a \emph{spectrum} of $\mu$.
The existence of spectrum of measures is a fundamental question in harmonic analysis, which was first studied by Fuglede for the normalized Lebesgue measure on measurable sets in ~\cite{Fuglede-1974}.
In the paper, Fuglede proposed his famous conjecture.  We write $\mbb{L}$ for the $d$-dimensional Lebesgue measure on $\R^d$ and $\frac{1}{\mbb{L}(A)} \mbb{L}|_A$ for the normalized Lebesgue measure restricted on a subset  $A\subset\R^d$. Note that the measure $\frac{1}{\mbb{L}(A)} \mbb{L}|_A$ may still be regarded as a measure defined on $\R^d$ since the measure outside $A$ is zero. 
\begin{conjecture}
\emph{A measurable set $\Gamma \sse \R^d$ with $0<\mbb{L}(\Gamma)<+\infty$ is a spectral set, that is, the  measure $\frac{1}{\mbb{L}(\Gamma)} \mbb{L}|_\Gamma$ is a spectral measure if and only if $\Gamma$ tiles $\R^d$ by translations.}
\end{conjecture}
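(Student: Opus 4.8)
Since the statement above is Fuglede's conjecture, the plan I would follow is a strategy for its tractable cases together with an account of why the full bi-conditional is out of reach. The unifying device is to translate both ``spectral'' and ``tiling'' into packing conditions on the zero set $Z(\wh{\mathbbm{1}_\Gamma})$ of the single entire function $\wh{\mathbbm{1}_\Gamma}$. On the spectral side, since $\la e_\lambda, e_{\lambda'}\ra_{L^2(\mu)} = \wh{\mathbbm{1}_\Gamma}(\lambda-\lambda')/\mbb{L}(\Gamma)$ for $\mu = \tfrac{1}{\mbb{L}(\Gamma)}\mbb{L}|_\Gamma$, a set $\Lambda$ gives an orthonormal family of exponentials exactly when $(\Lambda-\Lambda)\sm\{0\}\sse Z(\wh{\mathbbm{1}_\Gamma})$, and this family is a basis exactly when $\sum_{\lambda\in\Lambda}|\wh{\mathbbm{1}_\Gamma}(\xi-\lambda)|^2 = \mbb{L}(\Gamma)^2$ for a.e.\ $\xi$. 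On the tiling side, $\Gamma+\mcal{T}=\R^d$ means $\sum_{t\in\mcal{T}}\mathbbm{1}_\Gamma(x-t)=1$ a.e., which for a lattice $\mcal{T}=L$ is, by Poisson summation, the statement that $\wh{\mathbbm{1}_\Gamma}$ vanishes on $L^\ast\sm\{0\}$ together with $\mbb{L}(\Gamma)=\det L$. So the first step is to set up this dictionary precisely.

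The second step is the lattice case, which the dictionary makes almost immediate: if $\Gamma$ tiles by $L$ then $L^\ast\sm\{0\}\sse Z(\wh{\mathbbm{1}_\Gamma})$, so $L^\ast$ is an orthogonal set of exponentials, and the volume identity $\mbb{L}(\Gamma)=\det L=1/\det L^\ast$ forces its density to be exactly $\mbb{L}(\Gamma)$, which (via Parseval on a fundamental domain) upgrades orthogonality to completeness; conversely, a spectrum that happens to be a lattice runs the same chain backwards. I would then attack the non-lattice case in two regimes: (i) convex bodies $\Gamma$, where the rigidity of the decay and geometry of $\wh{\mathbbm{1}_\Gamma}$ (as exploited by Iosevich--Katz--Tao in the plane and by Greenfeld--Lev and Lev--Matolcsi in higher dimensions) forces any spectrum, or any tiling complement, to be periodic, reducing to the lattice case; (ii) $d=1$ and $d=2$ under extra structural hypotheses, where combinatorial structure theorems for single-set tilings and for spectra again yield periodicity.

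The main obstacle is that no such rigidity is available for general measurable $\Gamma$ when $d\ge 3$, and in fact the conjecture is \emph{false} there: Tao constructed a spectral subset of $\R^5$ (later brought down to $\R^3$) that tiles by no translation set, and Kolountzakis--Matolcsi constructed a tile that is not spectral, so neither implication survives in high dimension. Thus the realistic target of this program is a complete proof of the lattice case, a proof for convex domains in all dimensions via Fourier decay plus tiling structure theory, and proofs in dimensions $1$ and $2$ for restricted classes of sets; the step I expect to be genuinely hard — and the crux even of these surviving cases — is controlling \emph{non-periodic} spectra and tiling complements purely through the geometry of $Z(\wh{\mathbbm{1}_\Gamma})$.
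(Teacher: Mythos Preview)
The paper does not prove this statement at all: it is Fuglede's \emph{conjecture}, stated in the introduction purely as historical context and motivation, and immediately followed by the remark that it has been disproved for $d\ge 3$ and remains open for $d=1,2$. There is therefore no ``paper's own proof'' to compare your proposal against.

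Your write-up is a fair survey of the known landscape --- the lattice case, convex bodies via Iosevich--Katz--Tao and successors, and the Tao / Kolountzakis--Matolcsi counterexamples --- and you correctly identify that the full biconditional is out of reach (indeed false for $d\ge 3$). But as a proof of the stated biconditional it cannot succeed, and you already say as much. If the intent was to supply what the paper supplies for this statement, the honest answer is simply: the paper offers no proof, only a citation of the status of the problem.
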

Fuglede's conjecture has been disproved for $d \ge 3$, but the conjecture remains open for $d=1$ and $d=2$, see \cite{Farkas-Matolcsi-Mora-2006,Farkas-Revesz-2006,Kolountzakis-Matolcsi-2006a,Kolountzakis-Matolcsi-2006b,Matolcsi-2005, Tao-2004} for details. However the connection between spectrality and tiling has raised huge interest, and some affirmative results have been proved for special cases \cite{Iosevich-Katz-Tao-2003,Laba-2001}.

Many measures in fractal geometry are singular to Lebesgue measures,  that is, these measures are concentrated on  sets of zero Lebesgue measure. There is a big difference in the theory of spectrum between absolutely continuous measures and singular measures. 
An iterated function system (IFS) is a finite family of contractions $S_1,\ldots,S_m: \R^d \rightarrow \R^d$ with $m\geq 2$, and there exists a non-empty compact subset $E\subset \R^d$ such that 
$$
E=\bigcup_{i=1}^m S_i (E),
$$
which is called \emph{the attractor of the IFS $\{S_i\}_{i=1}^m$}. If the IFS   $\{S_i\}_{i=1}^m$ consists of affine contractions $S_i$: $\R^d\rightarrow \R^d$
\begin{equation}
S_i(x)=T_i(x)+a_i,\quad \quad i=1,2,\ldots,m,  \label{IFS}
\end{equation}
where $a_i\in \R^d$ is a translation vector and $T_i$ is a
non-singular linear mapping, we call the attractor $E$ \emph{ a self-affine set}. Given a probability vector $(p_1,\ldots, p_m)$ and an IFS   $\{S_i\}_{i=1}^m$ consisting of affine contractions, there exists a unique measure $\mu $  satisfying that  
\begin{equation}\label{msa}
\mu(A)=\sum_{i=1}^m p_i \mu  (S_i^{-1}(A)),
\end{equation}
for all Borel subsets $A \subseteq \R^d$, we call  $\mu$  {\em
a self-affine measure}. Note that the support of $\mu$ is $E$.  If the IFS   $\{S_i\}_{i=1}^m$ consists of similarity mappings, the attractor $E$ is called \emph{a self-similar set}, and $\mu$ is called
{\em  a self-similar measure}. It is clear that  self-similar sets and measures are special cases of self-affine sets and measures.  We refer readers to \cite{Falco03} for the background reading of fractal geometry.

In 1998, Jorgensen and Pedersen~\cite{Jorgensen-Pedersen-1998}  found that some self-similar measures may have spectra. For example, let  $\{S_1(x)=\frac{1}{4}x,  S_2(x)=\frac{1}{4}x+\frac{1}{2}\}$ and $p_1=p_2=\frac{1}{2}$. Then the corresponding self-similar measure  $\mu$ given by \eqref{msa}, i.e.
$$
\mu(A) = \frac{1}{2} \mu(S_1^{-1}(A)) + \frac{1}{2} \mu(S_2^{-1}(A)),
$$
for all Borel $A\subset\R$, is a spectral measure with a spectrum $$\Lambda = \bigcup_{n=1}^\f \set{\ell_1 + 4\ell_2 + \cdots + 4^{n-1} \ell_n: \ell_1,\ell_2,\cdots,\ell_n \in \set{0,1}}.$$
From then on, the spectrality of various self-similar measures and self-affine measures has been extensively studied, see \cite{An-Fu-Lai-2019,An-He-He-2019,An-He-2014,An-Lai-2020, An-Wang-2021,Dai-He-Lau-2014,Deng-Chen-2021,
Dutkay-Haussermann-Lai-2019,Fu-Wen-2017,Laba-Wang-2002,Li-2009,Liu-Dong-Li-2017,
He-Tang-Wu-2019} for various studies.

Admissible pairs are the key to study the spectrality of self-affine measures. Let $N$ be a $d \times d$ expansive matrix ( all eigenvalues have modulus strictly greater than $1$) with integer entries. Let $B\sse \Z^d$ be a finite subset of integer vectors with $\# B\ge 2$, where  $\#$ denotes the cardinality of a set.  If there exists $L\sse \Z^d$ with $\# B=\# L$ such that  the matrix 
\begin{equation}\label{unimat}
\left[ \frac{1}{\sqrt{\# B}} e^{-2 \pi i  (N^{-1}b)\cdot\ell }  \right]_{b \in B, \ell \in L} 
\end{equation}
is unitary, we call $(N, B)$ an {\it admissible pair} in $\R^d$. To emphasize on the dependence of $L$, we call $(N, B, L)$  a {\it Hadamard triple}, see~\cite{Dutkay-Haussermann-Lai-2019} for details.

 Given an  admissible pair $(N, B)$, the set $\{S_b(x)=N^{-1}(x+b)\}_{b\in B} $ forms an  IFS consisting of affine mappings. Setting the probability vector $(p_b=\frac{1}{\# B})_{b\in B}$, we write  $\mu_{N,B}$ for the corresponding self-affine measure given by \eqref{msa}  to emphasize the dependence on   $(N,B)$. Note that if $d=1$, $\mu_{N,B}$ is a self-similar measure.  In \cite{Laba-Wang-2002}, {\L}aba and Wang proved that the self-similar measure   $\mu_{N,B}$  in $\R$ is  a spectral measure. In~\cite{Dai-He-Lau-2014}, Dai, He and Lau completely resolved the spectrality  of the self-similar measure $\mu_{N,B}$ given by $B=\{0,1,\ldots, \# B-1\}$ satisfying  $\# B | N$ ( this means that $\# B$ is a divisor of $N$). In 2019, Dutkay, Haussermann and Lai  generalized {\L}aba and Wang's work to higher dimensional case in \cite{Dutkay-Haussermann-Lai-2019}, and they proved that the self-affine measure $\mu_{N,B}$ in $\R^d$ is   spectral.

\subsection{Infinite convolusions} 
For a finite subset $A \sse \R^d$, the uniform discrete measure supported on $A$ is given by
\begin{equation*}
\delta_A = \frac{1}{\# A} \sum_{a \in A} \delta_a,
\end{equation*}
where $\delta_a$ denotes the Dirac measure at the point $a$. Given a sequence $\{(N_n,B_n)\}_{n=1}^\infty $, for each integer $n\geq 1$, we write
\begin{equation}\label{def_mun}
  \mu_n =\delta_{N_1^{-1} B_1} * \delta_{(N_2N_1)^{-1} B_2} * \cdots * \delta_{(N_n \cdots N_2 N_1)^{-1} B_n},
\end{equation}
where $*$ denotes the convolution between measures.
If the sequence $\{\mu_n\}_{n=1}^\infty$ converges weakly to a Borel probability measure, the weak limit is called the \emph{infinite convolution} of $\{(N_n,B_n)\}_{n=1}^\infty$, denoted by
\begin{equation}\label{infinite-convolution}
  \mu = \delta_{N_1^{-1} B_1} * \delta_{(N_2N_1)^{-1} B_2} * \cdots * \delta_{(N_n \cdots N_2 N_1)^{-1} B_n} * \cdots .
\end{equation}
We refer readers to \cite{Folla99} for details. 
Note that $\mu_n$ may not converge in the weak topology, see Example~\ref{not converge}.
If infinite convolution $\mu$ exists, we write
\[\mu_{>n}=\delta_{(N_{n+1} \cdots N_2 N_1)^{-1}B_{n+1}}\ast\delta_{(N_{n+2} \cdots N_2 N_1)^{-1}B_{n+2}}\ast\cdots,\]
and we rewrite  $\mu =\mu_n\ast\mu_{>n}$.
For $n\ge1$, we write
\begin{equation}\label{def_nugn}
 \nu_{>n}=\delta_{{N_{n+1}}^{-1}B_{n+1}}\ast\delta_{(N_{n+2}N_{n+1})^{-1}B_{n+2}}\ast\cdots, 
\end{equation}
which is equivalent to  $\nu_{>n}( \cdot) =\mu_{>n}((N_n \cdots N_2 N_1)^{-1}\cdot)$.  Note that if all the elements in $\{(N_n,B_n)\}_{n=1}^\infty $ are identical to $(N, B)$, the corresponding  infinite convolution degenerates to a self-affine measure $\mu_{N,B}$.

It is natural to investigate the spectrality of infinite convolutions since the self-affine measures $\mu_{N,B}$ generated by admissible pairs $(N, B)$ may be represented by infinite convolutions.
The spectrality of the infinite convolution generated by admissible pairs was first studied by Strichartz~\cite{Strichartz-2000}.
From then on, huge interest has been aroused by the spectral question on infinite convolutions. Since there exist infinite convolutions generated by admissible pairs which are not spectral measures, see Example~\ref{ex_NE}, it is natural to raise the following question.
\begin{question}
\emph{ Given a sequence of admissible pairs $\{(N_k, B_k)\}_{k=1}^\f$, under what assumptions is the infinite convolution
\begin{equation} \label{def_admu}
 \mu = \delta_{N_1^{-1} B_1} * \delta_{(N_2N_1)^{-1} B_2} * \cdots * \delta_{(N_n \cdots N_2 N_1)^{-1} B_n} * \cdots 
\end{equation} 
 }
spectral?
\end{question}
Note that the support of $\mu$ given by \eqref{def_admu} is
 $$T_\mu=\bigg\{\sum_{n=1}^{\f}(N_n\cdots N_2N_1)^{-1}b_n:b_n\in B_n \ \text{for all}\ n\ge 1\bigg\}.$$
For every $\mbf b\in N_1^{-1}B_1+(N_2N_1)^{-1}B_2+\cdots+(N_n\cdots N_2N_1)^{-1}B_n$, we write 
 \[T_{\mbf b}=\mbf b+\bigg\{\sum_{j=1}^{\f}(N_{n+j}\cdots N_{n+2}N_{n+1})^{-1}b_j:b_j\in B_j \ \text{for all}\ j>n\bigg\}.\]
We say that $\mu$ satisfies {\it no-overlap condition} if $\mu(T_{\mbf b}\cap T_{\mbf b'})=0$	for all $\mbf b\neq \mbf b'\in N_1^{-1}B_1+(N_2N_1)^{-1}B_2+\cdots+(N_n\cdots N_2N_1)^{-1}B_n$ for all $n\in\N$.  In 2017, Dutkey and Lai \cite{Dutkay-Lai-2017} proved that the infinite convolution $\mu$ is a spectral measure if $\mu$ is compactly supported and satisfies  no-overlap condition,  and if the infimum of the singular values of $\frac{1}{\#  B_n}[|\hat{\mu}_{>n}(l)|e^{-2\pi ib\cdot l}]_{l\in L_n, b\in B_n}$ is positive where $\hat{\mu}$ is the Fourier transform of $\mu$.
In 2019, An, Fu and Lai  \cite{An-Fu-Lai-2019} studied this question in $\R$ and proved that the infinite convolution $\mu$ is spectral if $B_n\subset\{0,1,\ldots,N_n-1\}$ for all $n\geq 1$ and $\liminf_{n\to \infty} \#  B_n <\infty$. Recently, Li, Miao and Wang~\cite{LMW-2023} studied the spectrality of infinite convolutions $\mu$ generated by finitely many admissible pairs. We refer the readers to ~\cite{An-Fu-Lai-2019,An-He-Lau-2015,An-He-Li-2015,Dai-2012, Dutkay-Haussermann-Lai-2019,
Dutkay-Lai-2017,Fu-Wen-2017,Laba-Wang-2002} for further studies on infinite convolutions.

\section{Main results}\label{sec_mr}
Currently, the spectral theory of infinite convolutions mainly focuses on the measures generated by finitely many admissible pairs or with compact support. In this paper, we study the infinite convolution  generated by infinitely many admissible pairs  on $\R$, and such measures  may not be compactly  supported, see Example~\ref{ex_ncpt}.

From now on,   for each given sequence $\{( N_n,B_n)\}_{n=1}^\infty $, we always assume that $N_n>0$ and $B_n\subseteq\R$ with $\#B_n\geq 2$ for all $n\ge1$.

To obtain the weak limit of $\{\mu_n\}_{n=1}^\f$ generated by a sequence  $\{( N_n,B_n)\}_{n=1}^\infty $,  it is essential to make assumptions on the sequence. Given a sequence $\{( N_n,B_n)\}_{n=1}^\infty $, we say that $\{( N_n,B_n)\}_{n=1}^\infty $ satisfies {\it remainder bounded condition (RBC)} if
	\[\sum_{n=1}^{\infty} \frac{\# B_{n,2}}{\# B_n} < \infty,\]
where $B_{n,1}=B_n \cap \{0,1,\cdots,N_n-1\}$ and $B_{n,2}=B_n \backslash B_{n,1}.$
Note that if $B_n\subset \{0,1,\ldots, N_n-1\}$ for all $n\ge1$, it is clear that $\{( N_n,B_n)\}_{n=1}^\infty $ satisfies   RBC, moreover, the corresponding infinite convolution  exists and has compact support.
Since we do not assume $B_n\subset \{0,1,\ldots, N_n-1\}$, we come up with RBC  in this paper, and it plays a fundamental role in the study of infinite convolutions generated by infinitely many admissible pairs.

Our first conclusion shows that RBC guarantees the existence of infinite convolutions.
\begin{theorem}\label{existence}
Given $\{( N_n,B_n)\}_{n=1}^\infty $. If  the sequence satisfies either  RBC or 
$$
\sum_{n=1}^{\f}\frac{\max\{|b|:b\in B_n\}}{N_1N_2\cdots N_n}<\f.
$$ 
Then the infinite convolution $\mu$ given by \eqref{infinite-convolution} exists. \

If there exists $r_0>0$ such that 
$$
\sum_{n=1}^{\infty} \frac{\# \{b:\ b\in B_n,\ |b|>N_1N_2\cdots N_nr_0\}}{\# B_n} = \infty,
$$  
the infinite convolution  $\mu$ does not exist.
\end{theorem}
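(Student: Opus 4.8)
The plan is to translate everything into the language of sums of independent random variables. Fix a probability space on which there are independent random variables $(Y_n)_{n\ge1}$ with $Y_n$ uniformly distributed on $B_n$, and put $Z_n=(N_n\cdots N_2N_1)^{-1}Y_n$ and $X_n=Z_1+\cdots+Z_n$. The $Z_n$ are then independent with $\mathrm{law}(Z_n)=\delta_{(N_n\cdots N_2N_1)^{-1}B_n}$, so $X_n$ has law $\mu_n$ (see \eqref{def_mun}); hence the infinite convolution $\mu$ exists if and only if $(X_n)$ converges in distribution. Since $(X_n)$ is a sequence of partial sums of independent random variables, this is equivalent to almost sure convergence of $(X_n)$: this is classical (it is the real-valued case of the It\^o--Nisio theorem, and can also be extracted from L\'evy's inequality). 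For the two existence statements I only need the trivial implication ``a.s.\ convergence $\Rightarrow$ convergence in distribution'', whereas the non-existence statement uses the classical converse.

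For existence under the second hypothesis I would bound first moments: $\E|Z_n|=(N_1\cdots N_n)^{-1}\E|Y_n|\le (N_1\cdots N_n)^{-1}\max\{|b|:b\in B_n\}$, so $\sum_n\E|Z_n|<\infty$; by Tonelli $\sum_n|Z_n|<\infty$ almost surely, so $X_n$ converges (absolutely) a.s.\ and therefore in distribution, and $\mu$ exists. For existence under RBC, note first that RBC forces $N_n\ge2$ for all large $n$: if $N_n=1$ then $B_{n,1}\subseteq\{0\}$, whence $\#B_{n,2}/\#B_n\ge1-1/\#B_n\ge1/2$, incompatible with $\sum_n\#B_{n,2}/\#B_n<\infty$. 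Consequently $N_1\cdots N_{n-1}$ grows geometrically and $\sum_n(N_1\cdots N_{n-1})^{-1}<\infty$. Now apply the first Borel--Cantelli lemma to the events $E_n=\{Y_n\in B_{n,2}\}$: since $\PP(E_n)=\#B_{n,2}/\#B_n$ sums to a finite number by RBC, almost surely $Y_n\in B_{n,1}\subseteq\{0,1,\dots,N_n-1\}$ for every $n$ beyond some (random) index, so $|Z_n|\le(N_1\cdots N_n)^{-1}(N_n-1)<(N_1\cdots N_{n-1})^{-1}$ for those $n$; thus $\sum_n|Z_n|<\infty$ a.s., $X_n$ converges a.s., and $\mu$ exists.

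For non-existence I would apply the second Borel--Cantelli lemma to $F_n=\{|Z_n|>r_0\}=\{|Y_n|>N_1\cdots N_nr_0\}$. These events are independent, since $F_n$ depends only on $Y_n$, and $\PP(F_n)=\#\{b\in B_n:|b|>N_1\cdots N_nr_0\}/\#B_n$, so the hypothesis says exactly $\sum_n\PP(F_n)=\infty$; hence $\PP(\limsup_nF_n)=1$. Thus almost surely $|Z_n|>r_0$ for infinitely many $n$, so $Z_n=X_n-X_{n-1}$ does not tend to $0$ a.s.\ and $(X_n)$ does not converge almost surely. By the equivalence recalled above, $(X_n)$ then does not converge in distribution, i.e.\ $\{\mu_n\}$ does not converge weakly to any Borel probability measure, which is precisely to say that the infinite convolution $\mu$ does not exist.

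The only genuinely non-elementary ingredient is the equivalence invoked in the last step, specifically the direction ``convergence in distribution $\Rightarrow$ almost sure convergence'' for partial sums of independent variables; I would either quote it or supply a self-contained proof along the following lines. Assuming $\mu_n\to\mu$ weakly, $\{\mu_n\}$ is tight; the remainder laws $\nu^{(m)}:=\lim_n\mathrm{law}(X_n-X_m)$ exist (by L\'evy's continuity theorem, using that the zero set of $\widehat{\mu}_m$ is discrete), are tight (otherwise mass of $\mu=\mu_m\ast\nu^{(m)}$ would escape to infinity), and satisfy $\widehat{\nu^{(m)}}=\widehat{\mu}/\widehat{\mu}_m\to1$ pointwise almost everywhere, so $\nu^{(m)}\to\delta_0$ weakly; feeding this into $\nu^{(m)}=\mathrm{law}(X_n-X_m)\ast\nu^{(n)}$ shows $(X_n)$ is Cauchy in probability, and Ottaviani's maximal inequality for partial sums of independent variables upgrades that to almost sure Cauchyness. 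I would also stress why a direct Fourier-analytic estimate (or a second-moment estimate) is \emph{not} the right route: the hypothesis only bounds the \emph{number} of ``large'' elements of $B_n$, and these may be clustered, so no useful lower bound on the relevant Fourier defect (nor on $\mathrm{Var}(X_n)$, when finite) is available; passing to the increments $Z_n$ and using Borel--Cantelli sidesteps this obstruction entirely.
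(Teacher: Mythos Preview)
Your argument is correct and, at bottom, the same mathematics as the paper's: both reduce the question to Kolmogorov-type convergence criteria for sums of independent random variables. The paper quotes the Jessen--Wintner ``three-series'' theorem for measures (their Theorem~\ref{three-series-theorem}) as a black box and simply verifies or refutes series~(i) for each hypothesis; you instead unpack that theorem into its constituents---first and second Borel--Cantelli plus the L\'evy/It\^o--Nisio equivalence of weak and almost-sure convergence for independent sums. What you gain is a more self-contained proof that does not rely on an external reference, and you are more explicit about why $\sum_n (N_1\cdots N_{n-1})^{-1}<\infty$ (the paper tacitly uses $N_n\ge 2$, which in context follows from $N_n$ being an expansive integer, but you derive it from RBC itself). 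What the paper's packaging buys is brevity: once the three-series theorem is stated, each of the three assertions is a one-line verification, whereas your non-existence argument must separately justify the hard direction of the equivalence. Your closing remark that a direct Fourier or second-moment estimate is not available is apt, though the paper's route via series~(i) of the three-series theorem sidesteps this obstruction just as cleanly as your Borel--Cantelli argument does.
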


Next, we investigate the spectrality of  infinite convolutions generated by  admissible pairs containing some special subsequences.
Given an admissible pair $( N,B) $, we call $B$ a {\it general consecutive set}  if
	\[B\equiv \{0,1,2,\cdots, \# B-1\} \pmod{N}.\]
Note that $ \# B | N$,  that is, $\# B$ is a divisor of $N$.  Since $( N,B) $ is an admissible pair, there exists  $L\subseteq\Z$ such that $(N, B, L)$ is a Hadamard triple. Then  $( N,\{0,1,\cdots, \# B-1\}, L) $ is also  a Hadamard triple. If $\# B \nmid N$, then the equation $\sum_{b=0}^{\# B-1}e^{2\pi i (N^{-1}b)\cdot x}=0$ has no integer solution, which contradicts the fact that  $L\subseteq\Z$ and the matrix \eqref{unimat} is unitary.

Our second main conclusion shows that  infinite convolutions are spectral measures if  general consecutive sets appear infinitely many times in the sequence of admissible pairs.
\begin{theorem}\label{infinitely many admissible pairs}
Let $\{( N_n,B_n)\}_{n=1}^\infty $ be a sequence of admissible pairs satisfying {\it RBC}. Suppose that there exists  a subsequence $\{B_{n_k}\}_{k=1}^\infty$ of general consecutive sets. Then the infinite convolution $\mu$  given by \eqref{infinite-convolution} exists and is a spectral measure.
\end{theorem}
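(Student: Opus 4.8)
The statement has two parts, and \emph{existence} is immediate: the sequence satisfies RBC by hypothesis, so Theorem~\ref{existence} applies and $\mu$ in \eqref{infinite-convolution} exists; RBC also makes $\wh\mu(\xi)=\prod_{n\ge1}\wh{\delta_{B_n}}\big((N_1\cdots N_n)^{-1}\xi\big)$ converge locally uniformly. The rest concerns spectrality, and the plan follows the standard architecture for infinite convolutions (Strichartz~\cite{Strichartz-2000}; Dutkay--Lai~\cite{Dutkay-Lai-2017}; An--He--Lau~\cite{An-He-Lau-2015}; An--Fu--Lai~\cite{An-Fu-Lai-2019}; Li--Miao--Wang~\cite{LMW-2023}): build a candidate spectrum $\Lambda$ level by level from the Hadamard triples, check orthonormality of $\set{e_\lambda:\lambda\in\Lambda}$ in $L^2(\mu)$, and verify completeness via the Jorgensen--Pedersen criterion $Q_\Lambda(\xi):=\sum_{\lambda\in\Lambda}|\wh\mu(\xi+\lambda)|^2\equiv1$.

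For each $n$, admissibility gives $L_n\sse\Z$ with $(N_n,B_n,L_n)$ a Hadamard triple, and pairwise orthogonality of the columns of \eqref{unimat} forces the $L_n$ to be distinct modulo $N_n$. At a distinguished index, $B_{n_k}$ being a general consecutive set means $\#B_{n_k}\mid N_{n_k}$ and $B_{n_k}\equiv\set{0,\dots,\#B_{n_k}-1}\pmod{N_{n_k}}$, so the choice $L_{n_k}=\frac{N_{n_k}}{\#B_{n_k}}\set{0,1,\dots,\#B_{n_k}-1}$ is available, for which \eqref{unimat} is a rescaled discrete Fourier matrix; at the remaining indices the Hadamard complement is not unique, and the construction will use this freedom. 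With $\Lambda_n=L_1+N_1L_2+\dots+N_1\cdots N_{n-1}L_n$ and $\Lambda=\bigcup_n\Lambda_n$, orthonormality is the easy step: $\wh{\delta_{B_n}}$ is $1$-periodic since $B_n\sse\Z$, so for distinct $\lambda,\lambda'\in\Lambda$ the first coordinate where they differ, say the $j$-th, produces a vanishing off-diagonal entry of the $j$-th unitary matrix, whence $\wh\mu(\lambda-\lambda')=0$; and each $\Lambda_n$ is a spectrum of the discrete measure $\mu_n$.

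Completeness, i.e.\ $Q_\Lambda\equiv1$, is the substance. Orthonormality gives $Q_\Lambda\le1$ with $Q_\Lambda(0)=1$, and $Q_\Lambda$ is lower semicontinuous, so $\set{Q_\Lambda=1}$ is closed and one only needs equality on a dense set. A naive choice of the $L_n$ can fail here --- for a product of copies of $(2,\set{0,1})$ a one-sided choice gives $\Lambda=\Z_{\ge0}$, not a spectrum of the Lebesgue measure on $[0,1]$ --- so the $L_n$ must be chosen adaptively, remedying the current deficiency of $Q$ as the levels grow. What makes such a construction close is an \emph{equi-positivity} property of the tail measures $\nu_{>n}$ of \eqref{def_nugn}: a uniform $\ep_0>0$ with $|\wh{\nu_{>n}}|\ge\ep_0$ persisting, up to appropriate translates, across all $n$. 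Granting it, one writes $\mu=\mu_n\ast\mu_{>n}$, decomposes $Q_\Lambda$ along the level-$n$ tree, and --- using that $\mu_n$ is spectral with spectrum $\Lambda_n$, the uniform tail bound, and the tightness $\mu_{>n}\to\delta_0$ --- shows the level-$n$ truncations of $Q_\Lambda$ converge to $1$ after the adaptive adjustment; this is the route of \cite{Dutkay-Lai-2017,An-Fu-Lai-2019,LMW-2023}.

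The crux, and where the subsequence of general consecutive sets enters, is verifying equi-positivity. A general consecutive set $B_{n_k}$, congruent to a full block modulo $N_{n_k}$, contributes to $\wh{\nu_{>n_k-1}}$ a Dirichlet-kernel-type factor whose modulus is bounded below, up to a suitable integer translate, on a neighborhood comparable to the ambient scaling --- the same structural fact underlying the spectrality of $\mu_{N,B}$ for classical $B$ in \cite{Laba-Wang-2002,Dai-He-Lau-2014}. RBC then ensures that the unbounded parts $B_{n,2}$ of \emph{all} terms contribute only a summably small total perturbation, so a weak-compactness argument on the tails upgrades the pointwise estimates to the uniform bound $\ep_0$. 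I expect the main obstacles to be the technical points of this step: the cardinalities $\#B_{n_k}$ need not be bounded, so the good neighborhoods shrink and uniformity is delicate; $B_{n_k}$ is only congruent to $\set{0,\dots,\#B_{n_k}-1}$, not equal to it, so its Fourier factor carries an extra coarse-scale modulation to be absorbed (e.g.\ into the preceding block); and the intervening pairs $(N_n,B_n)$ are arbitrary, controlled only through RBC. Once equi-positivity is established, the adaptive construction closes and $\mu$ is spectral.
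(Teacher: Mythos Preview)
Your plan is correct in spirit and lands on the same mechanism as the paper: establish equi-positivity of a subsequence of tail measures $\nu_{>n_k-1}$ and invoke the equi-positive $\Rightarrow$ spectral principle (Theorem~\ref{equi-positive}). The organizational differences are worth noting. First, the paper does not argue equi-positivity for general consecutive sets directly; instead it factors the argument through the \emph{partial concentration condition} (PCC): it shows that a subsequence of general consecutive sets satisfying RBC always admits a further subsequence satisfying PCC (splitting into the cases $\#B_{n_k}=N_{n_k}$ and $\#B_{n_k}<N_{n_k}$), and then appeals to Theorem~\ref{gap-condition}, whose proof carries the actual Fourier estimates. This modularity is what makes the same machinery reusable for Theorems~\ref{gap-condition} and~\ref{finitely many admissible pairs-1}. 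Second, the paper never uses a weak-compactness argument on the tails---and you should be cautious here, since $\mu$ need not be compactly supported (Example~\ref{ex_ncpt}), so the usual tightness route from \cite{An-Fu-Lai-2019} is not available as stated; instead the paper obtains \emph{explicit, uniform} lower bounds on $|\wh{\nu}_{>n_k-1}(\xi)|$ for $\xi\in[-\tfrac12-\delta_0,\tfrac12+\delta_0]$ by estimating $|M_{B_{n_k}}(\xi_0)|$, $|M_{B_{n_k+1}}(\xi_1)|$, and $\prod_{j\ge2}|M_{B_{n_k+j}}(\xi_j)|$ separately, handling the deviation of $B_n$ from $B_n\pmod{N_n}$ via RBC. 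Third, your discussion of adaptively choosing $L_n$ and closing the Jorgensen--Pedersen function $Q_\Lambda$ is essentially a sketch of the \emph{proof} of Theorem~\ref{equi-positive} from \cite{LMW-2023}; the paper simply cites that result, so once equi-positivity is secured no further construction is needed. Your anticipated obstacles (unbounded $\#B_{n_k}$, the modular versus exact form of $B_{n_k}$) are real and are exactly what the PCC formulation and the elementary angle estimates (Propositions~\ref{gap}--\ref{gap1}) are designed to absorb.
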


Immediately, we have the following conclusion for general consecutive sets.
\begin{corollary}
Let $\{(N_n,B_n)\}_{n=1}^\infty$ be  a sequence  of admissible pairs of general consecutive sets with {\it RBC} satisfied. Then the infinite convolution $\mu$  given by \eqref{infinite-convolution} exists and  is a spectral measure.
\end{corollary}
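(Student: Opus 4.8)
The plan is simply to recognize this corollary as the degenerate case of Theorem~\ref{infinitely many admissible pairs} in which the distinguished subsequence of general consecutive sets is the entire sequence. Concretely, I would take $n_k = k$ for all $k \ge 1$, so that $\{B_{n_k}\}_{k=1}^\infty = \{B_n\}_{n=1}^\infty$ is by hypothesis a (sub)sequence consisting entirely of general consecutive sets. The other standing hypothesis of Theorem~\ref{infinitely many admissible pairs}, namely that $\{(N_n,B_n)\}_{n=1}^\infty$ is a sequence of admissible pairs satisfying RBC, is assumed verbatim in the statement of the corollary. Hence all hypotheses of Theorem~\ref{infinitely many admissible pairs} are met.

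Applying that theorem then yields at once that the infinite convolution $\mu$ given by \eqref{infinite-convolution} exists and is a spectral measure, which is exactly the assertion of the corollary. The only point that merits a sentence of care is that the word ``subsequence'' in Theorem~\ref{infinitely many admissible pairs} is used in the non-proper sense, so that choosing the full index set $\{n_k\} = \N$ is legitimate; once this is noted there is nothing further to prove. In particular, there is no genuine obstacle here — the content of the corollary is entirely carried by Theorem~\ref{infinitely many admissible pairs}, and the statement is included only to isolate the especially clean case where every generating set is a general consecutive set (and where, as remarked after the definition, each $\# B_n$ necessarily divides $N_n$).
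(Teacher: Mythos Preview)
Your proposal is correct and matches the paper's approach exactly: the paper introduces this corollary with the phrase ``Immediately, we have the following conclusion,'' giving no separate proof, so it is indeed intended as the trivial specialization of Theorem~\ref{infinitely many admissible pairs} with $n_k = k$.
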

Note that the infinite convolution $\mu$ may not exist even if $\mu$ is generated by a sequence of admissible pairs only consisting of  general consecutive sets, see Example~\ref{not converge}. There also exist examples that the infinite convolution generated by general consecutive sets is not a spectral measure, see Example~\ref{ex_NE}. Therefore the remainder bounded condition (RBC) is essential in our context.

Suppose that $B_n = \{0,1,\dots, \# B_n-1\}$ for all integers $n\geq1$. It is clear that $B_{n,2}$ is empty, and  the sequence $\{( N_n,B_n)\}_{n=1}^\infty $  satisfies   RBC. The following conclusion is an immediate consequence of Theorem \ref{infinitely many admissible pairs}, and it shows that the infinite convolutions generated by such special general consecutive sets are spectral measures.
\begin{corollary}
Let $\{(N_n,B_n)\}_{n=1}^\infty$ be  a sequence  of admissible pairs with $B_n = \{0,1,\dots, \# B_n-1\}$. Then the infinite convolution $\mu$  given by \eqref{infinite-convolution} exists and  is a spectral measure.
\end{corollary}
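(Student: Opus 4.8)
The plan is to obtain this as a direct consequence of Theorem \ref{infinitely many admissible pairs}, so the only work is to verify its two hypotheses for the sequence $\{(N_n,B_n)\}_{n=1}^\infty$ at hand: that it satisfies RBC, and that it admits a subsequence of general consecutive sets.

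First I would check the second hypothesis, which is immediate. For each $n$ the set $B_n=\{0,1,\dots,\#B_n-1\}$ is literally congruent to $\{0,1,\dots,\#B_n-1\}\pmod{N_n}$, so $B_n$ is a general consecutive set in the sense defined before Theorem \ref{infinitely many admissible pairs}. Hence the whole sequence (taking $n_k=k$) already serves as the required subsequence of general consecutive sets.

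Next I would verify RBC. By the remark following the definition of general consecutive sets, a general consecutive set attached to an admissible pair $(N_n,B_n)$ satisfies $\#B_n\mid N_n$, and in particular $\#B_n\le N_n$, so $\#B_n-1\le N_n-1$. Therefore $B_n=\{0,1,\dots,\#B_n-1\}\subseteq\{0,1,\dots,N_n-1\}$, which gives $B_{n,1}=B_n$ and $B_{n,2}=\emptyset$ for every $n$. Consequently
\[
\sum_{n=1}^{\infty}\frac{\#B_{n,2}}{\#B_n}=0<\infty,
\]
so RBC holds. With both hypotheses in place, Theorem \ref{infinitely many admissible pairs} applies and yields at once the existence of the infinite convolution $\mu$ given by \eqref{infinite-convolution} and its spectrality.

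There is essentially no obstacle here beyond this bookkeeping; the one place where admissibility is actually used rather than the definitions merely being unwound is the inequality $\#B_n\le N_n$. If one prefers not to cite the earlier remark, this can be seen directly: if $\#B_n>N_n$ then $0$ and $N_n$ both lie in $B_n$, and since $N_n^{-1}\cdot 0$ and $N_n^{-1}\cdot N_n$ differ by an integer, the corresponding two rows of the matrix \eqref{unimat} coincide, so that matrix cannot be unitary, contradicting the admissibility of $(N_n,B_n)$.
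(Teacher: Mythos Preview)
Your proof is correct and follows exactly the same approach as the paper: the paper notes in the sentence preceding this corollary that $B_{n,2}$ is empty (so RBC holds) and that the result is then an immediate consequence of Theorem \ref{infinitely many admissible pairs}. Your write-up is simply a more detailed unpacking of that same reasoning, including the justification $\#B_n\mid N_n$ (hence $\#B_n\le N_n$) that the paper records right after defining general consecutive sets.
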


Since general consecutive  sets are special, it is better to replace it by some more general assumptions.  Given $l\in (0,1)$, we say $\{( N_n,B_n)\}_{n=1}^\infty $ satisfies  \textit{partial concentration condition (PCC)}, if  one of the following two conditions holds

\noindent(i) there exists a sequence $\{(b_{n,1},b_{n,2}):b_{n,1},b_{n,2}\in[0,N_n-1]\}_{n=1}^\infty $ such that $0<\frac{b_{n,2}-b_{n,1}}{N_n}<l$ satisfying
	\[\sum_{n=1}^{\f}\frac{\# B_{n,2}^{l}}{\# B_n}<\f ,\]
where $B_{n,1}^{l}=B_n\cap [b_{n,1},b_{n,2}]$ and $B_{n,2}^{l}=B_n\backslash B_{n,1}^{l};$

\noindent(ii) there exists $c\in(0,1]$ and a sequence $\{(b_{n,1},b_{n,2}):b_{n,1},b_{n,2}\in[\frac{l}{2} N_n,(1-\frac{l}{2})N_n]\}_{n=1}^\infty $ such that
\[\frac{\# B_{n}^{l}}{\# B_n}\ge c,\]
for all $n\in \N$, where $B_{n}^{l}=B_n\cap [b_{n,1},b_{n,2}].$\\


Note that the sequence $\{( N_{n},B_{n})\}_{n=1}^\infty $ of general consecutive sets satisfying RBC always has a subsequence which satisfies  the partial concentration condition, see Example~\ref{ex_ncpt}, and the proof of Theorem~\ref{infinitely many admissible pairs}.

Finally, we  apply  the remainder bounded condition(RBC) and the partial concentration condition (PCC) together to obtain the spectrality of infinite convolution.
\begin{theorem}\label{gap-condition}
Let $\{( N_n,B_n)\}_{n=1}^\infty $ be a sequence of admissible pairs satisfying {\it RBC}.
If there exists a subsequence $\{( N_{n_k},B_{n_k})\}_{n=1}^\infty  $ satisfying  PCC, then the infinite convolution $\mu$  given by \eqref{infinite-convolution} exists and  is a spectral measure.
\end{theorem}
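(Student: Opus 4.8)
The plan is to follow the route used for Theorem~\ref{infinitely many admissible pairs}: produce an explicit candidate spectrum from the Hadamard partners of the pairs, check orthonormality, and obtain completeness from the equi-positivity of the tail measures along the ``good'' subsequence, the novelty being that equi-positivity is now extracted from PCC rather than from the general consecutive structure. Existence of $\mu$ is immediate from Theorem~\ref{existence}, since RBC holds. For each $n$ fix $L_n\sse\Z$ with $(N_n,B_n,L_n)$ a Hadamard triple, $0\in L_n$, and (normalising $L_n$ modulo $N_n$, which is harmless because $B_n\sse\Z$) $L_n\sse[0,N_n)$; writing $m_B(x)=\frac1{\#B}\sum_{b\in B}e^{-2\pi i bx}$, the unitarity of \eqref{unimat} reads $m_{B_n}\bigl(N_n^{-1}(\ell-\ell')\bigr)=0$ for distinct $\ell,\ell'\in L_n$. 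Set
\[\Lambda=\Bigl\{\,\textstyle\sum_{j=1}^{m}(N_1\cdots N_{j-1})\ell_j:\ m\ge1,\ \ell_j\in L_j\,\Bigr\}\sse\Z .\]
Using $\hat\mu(\xi)=\prod_{n\ge1}m_{B_n}\bigl((N_n\cdots N_1)^{-1}\xi\bigr)$ and isolating the first index at which two elements of $\Lambda$ differ, one checks $\la e_\lambda,e_{\lambda'}\ra_{L^2(\mu)}=\hat\mu(\lambda-\lambda')=\delta_{\lambda,\lambda'}$, so $\{e_\lambda:\lambda\in\Lambda\}$ is orthonormal in $L^2(\mu)$; equivalently $Q(\xi):=\sum_{\lambda\in\Lambda}|\hat\mu(\xi+\lambda)|^2\le1$, with $Q(0)=1$, and each finite truncation $\Lambda_m$ is a spectrum for $\mu_m$. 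By the Jorgensen--Pedersen criterion \cite{Jorgensen-Pedersen-1998}, $\mu$ is a spectral measure with spectrum $\Lambda$ as soon as $Q\equiv1$.

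For $Q\equiv1$ I would use the established machinery built on equi-positivity of tail measures (\cite{Strichartz-2000,Dutkay-Lai-2017,Dutkay-Haussermann-Lai-2019,An-Fu-Lai-2019,LMW-2023}): cutting $\mu=\mu_N\ast\mu_{>N}$ and factoring $\Lambda$ over the first $N$ and the remaining indices yields the block identity
\[Q(\xi)=\sum_{\lambda\in\Lambda_{N}}|\hat\mu_{N}(\xi+\lambda)|^2\,Q_{\nu_{>N}}\bigl((N_N\cdots N_1)^{-1}(\xi+\lambda)\bigr),\qquad \sum_{\lambda\in\Lambda_N}|\hat\mu_N(\xi+\lambda)|^2=1,\]
where $Q_{\nu_{>N}}$ is the $Q$-function of $\nu_{>N}$ relative to the analogous candidate spectrum (built from $L_{N+1},L_{N+2},\dots$ in the same way). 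Applying this repeatedly \emph{along the good subsequence} $\{n_k\}$ (between consecutive good indices one has a finite, hence spectral, convolution, so the weights again sum to $1$) and using the no-overlap property valid for Hadamard triples, it suffices to prove: there is $\ep_0>0$ so that for every $k$ and every $z\in[0,1)$ there is an integer $m$ with $|\hat\nu_{>n_k-1}(z+m)|\ge\ep_0$. Here $\nu_{>n_k-1}$ appears because its first factor is the good pair $(N_{n_k},B_{n_k})$; the reduction to $z\in[0,1)$, the absorption of the finitely many initial tails, and the boosting of this uniform lower bound to $Q\equiv1$ are carried out exactly as in \cite{An-Fu-Lai-2019,LMW-2023} --- RBC being used once more to replace each tail by a compactly supported approximant with $o(1)$ total-variation error, so that the relevant $Q$-functions are real-analytic.

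It remains to verify equi-positivity from PCC, which is the heart of the matter. Factor
\[\hat\nu_{>n_k-1}(z+m)=m_{B_{n_k}}\bigl(N_{n_k}^{-1}(z+m)\bigr)\cdot\hat\nu_{>n_k}\bigl(N_{n_k}^{-1}(z+m)\bigr),\]
and take $m$ to be the integer nearest $-z$, so that $|z+m|\le\tfrac12$ and $|N_{n_k}^{-1}(z+m)|\le\tfrac1{2N_{n_k}}$. Because every core digit lies in $[0,N_j)$ and $\sum_j\#B_{j,2}/\#B_j<\f$, at such a tiny argument all masks in the product defining $\hat\nu_{>n_k}$ are within $O(2^{-i})$ of modulus $1$, so $|\hat\nu_{>n_k}(N_{n_k}^{-1}(z+m))|\ge c_1>0$ with $c_1$ independent of $k$ (for all but finitely many $k$). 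Thus everything reduces to a uniform lower bound on $|m_{B_{n_k}}(N_{n_k}^{-1}(z+m))|$, which is where PCC enters. In Case~(i) the core $B_{n_k,1}^l$ lies in an interval of length $<lN_{n_k}$, so $|(b-b')N_{n_k}^{-1}(z+m)|<l/2<\tfrac12$ for core digits $b,b'$; after recentring, the core exponentials lie in a cone of half-angle $\le\tfrac{\pi l}{2}<\tfrac\pi2$, whence $|m_{B_{n_k,1}^l}(N_{n_k}^{-1}(z+m))|\ge\cos\tfrac{\pi l}{2}$, and since $\#B_{n_k,2}^l/\#B_{n_k}\to0$ we get $|m_{B_{n_k}}(N_{n_k}^{-1}(z+m))|\ge\tfrac12\cos\tfrac{\pi l}{2}>0$ for large $k$, as needed. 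In Case~(ii) the only structural input is that a proportion $\ge c$ of $B_{n_k}$ avoids the $\tfrac l2$-collars of $0$ and $N_{n_k}$; here one must argue more delicately --- for instance by averaging over the Hadamard partners through $\sum_{\ell\in L_{n_k}}|m_{B_{n_k}}(N_{n_k}^{-1}(\xi+\ell))|^2=1$, or via a ``no escape of mass to $\{0,1\}$'' property of the weak accumulation points of $\{\nu_{>n_k-1}\}_k$ --- to again produce a bounded integer $m$ and a uniform $\ep_0$ with $|m_{B_{n_k}}(N_{n_k}^{-1}(z+m))|\ge\ep_0$.

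I expect Case~(ii) to be the main obstacle. Unlike Case~(i) it gives no control on how spread out $B_{n_k}$ is inside $[0,N_{n_k})$, so a uniform $\ep_0$ together with a translate of bounded size can only come from the arithmetic of the partner set $L_{n_k}$ or from a compactness argument on the family of tails; making either of these work while carefully tracking the RBC error terms inherited along the way is the delicate point. Everything else above is bookkeeping or an appeal to the established machinery.
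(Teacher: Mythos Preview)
Your overall architecture---existence from RBC, then equi-positivity of the tails $\{\nu_{>n_k-1}\}$ along the PCC subsequence, then spectrality---is exactly the paper's route. But two things should be said.

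First, the entire middle block of your argument (building $\Lambda$, the $Q$-function, the block identity, the total-variation approximation) is unnecessary here: the paper simply invokes Theorem~\ref{equi-positive}, which already packages the reduction ``equi-positive tails $\Rightarrow$ spectral''. You are essentially re-sketching the proof of that theorem. This is not wrong, but it obscures where the real work lies.

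Second, and more importantly, Case~(ii) of PCC is a genuine gap in your proposal, and neither of your two suggested fixes works as stated. Averaging via $\sum_{\ell\in L_{n_k}}|m_{B_{n_k}}(N_{n_k}^{-1}(\xi+\ell))|^2=1$ only gives a mask value $\ge(\#B_{n_k})^{-1/2}$, which need not be bounded below in $k$; worse, the translate $\ell$ it produces lies in $[0,N_{n_k})$, so $|z+\ell|/N_{n_k}$ can be anywhere in $[0,1)$ and your tail bound on $\hat\nu_{>n_k}$ (which you only established for arguments of size $O(N_{n_k}^{-1})$) collapses. The weak-compactness idea is also delicate because the tails are not tight in general (the paper explicitly allows non-compact support). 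The paper's actual argument for Case~(ii) keeps your choice $|z+m|\le\tfrac12+\delta$ (so the tail bound survives) and instead bounds the \emph{first} mask directly by an elementary real/imaginary-part trick: for $|\xi|\le\tfrac14$ all angles $2\pi b\xi/N_{n_k}$ with $b\in B'_{n_k}$ lie in $[0,\tfrac\pi2]$, so the real part is $\ge r(\tfrac\pi2)$; for $|\xi|\in[\tfrac14,\tfrac12+\delta]$ the hypothesis $b\in[\tfrac l2 N_{n_k},(1-\tfrac l2)N_{n_k}]$ forces those angles into $(\tfrac{\pi l''}{2},\pi-\tfrac{\pi l''}{2})$, where $\sin\ge\sin\tfrac{\pi l''}{2}>0$, and since a proportion $\ge c$ of the (mod-$N_{n_k}$-reduced) digits lie there, the imaginary part of the mask is $\ge c\sin\tfrac{\pi l''}{2}-(1-c)\sin(2\pi\delta)>0$ for small $\delta$. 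This is the missing idea; once you have it, your Case~(i) argument and the tail estimate (which the paper also splits as $j=1$ and $j\ge2$) complete the proof exactly as you outlined.
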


We say $\{( N_n,B_n)\}_{n=1}^\infty $ satisfies  \textit{digit bounded condition (DBC)}, if
\[
\sup_n\{N_n^{-1}|b|, \ b\in B_n\}<\f \quad \text{and} \quad \sup_n \# B_n<\f.
\]
This condition is frequently used in the spectral theory of infinite convolutions, see \cite{An-Fu-Lai-2019, Liu-Lu-Zhou}. One consequence of it is  that the  corresponding infinite convolution  exists. Moreover, if  $\sup_n\{N_n\}<\f$, that is, the sequence $\{( N_n,B_n)\}_{n=1}^\infty $ is chosen from finitely many admissible pairs, the infinite convolution exists and has compact support, see \cite{LMW-2023, Liu-Lu-Zhou}.
In the following theorem, we apply the digit bounded condition to part of the  admissible pairs.
\begin{theorem}\label{finitely many admissible pairs-1}
Given a sequence of admissible pairs $\{( N_n,B_n)\}_{n=1}^\infty $. If there exists a subsequence $\{(N_{n_k},B_{n_k})\}_{k=1}^\f$ satisfying PCC and {\it RBC},  and $\{( N_n,B_n)\}_{n\notin\{n_k\}} $ satisfies {\it DBC}, then the infinite convolution $\mu$  given by \eqref{infinite-convolution} exists and is a spectral measure.
\end{theorem}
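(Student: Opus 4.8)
The plan is to obtain the existence of $\mu$ from the two "tame" regimes and then to run the equi-positivity scheme that underlies the proofs of Theorems~\ref{infinitely many admissible pairs} and~\ref{gap-condition}, the new point being that the $\mathrm{DBC}$-factors, which may carry digits outside $[0,N_n)$ and hence violate $\mathrm{RBC}$, must be shown not to spoil the uniform Fourier lower bounds. For existence, write $I=\{n_k\}$ and realise $\mu_n$ as the law of $\sum_{j\le n}X_j$ with the $X_j$ independent, $X_j$ uniform on $(N_j\cdots N_1)^{-1}B_j$; then $\mu$ exists precisely when $\sum_j X_j$ converges almost surely. Since every $N_j\ge 2$, for $n\notin I$ the $\mathrm{DBC}$ bound $|b|\le C N_n$ ($b\in B_n$) gives $|X_n|\le C(N_1\cdots N_{n-1})^{-1}\le C\,2^{\,1-n}$, so $\sum_{n\notin I}|X_n|\le 2C$ deterministically; for $n\in I$, if $A_n$ is the event that $X_n$ uses a digit of $B_{n,2}$, then $\sum_{n\in I}\PP(A_n)=\sum_{n\in I}\#B_{n,2}/\#B_n<\f$ by $\mathrm{RBC}$, so Borel--Cantelli gives that only finitely many $A_n$ occur a.s., and off them $|X_n|<(N_1\cdots N_{n-1})^{-1}\le 2^{\,1-n}$. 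Hence $\sum_j X_j$ converges a.s.; this is the mechanism of Theorem~\ref{existence} applied on $I$ and on $I^c$ separately.

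For spectrality, fix $L_n$ with $(N_n,B_n,L_n)$ a Hadamard triple and take the candidate spectrum $\Lambda=\bigcup_{N\ge1}\{\ell_1+N_1\ell_2+\cdots+(N_1\cdots N_{N-1})\ell_N:\ell_j\in L_j\}$; the Hadamard identities show at once that $\{e_\lambda:\lambda\in\Lambda\}$ is orthonormal in $L^2(\mu)$. For completeness I would use the same equi-positivity argument as in Theorems~\ref{infinitely many admissible pairs} and~\ref{gap-condition}: it suffices that $\{\nu_{>n_k-1}\}_k$ be equi-positive, which I would get by bounding $-\log|\hat\nu_{>n_k-1}(\xi)|$ on a fixed neighbourhood $U=(-R,R)$ of $0$ factor by factor, using $\hat\nu_{>n_k-1}(\xi)=\hat\delta_{N_{n_k}^{-1}B_{n_k}}(\xi)\prod_{m\ge1}\hat\delta_{(N_{n_k+m}\cdots N_{n_k})^{-1}B_{n_k+m}}(\xi)$. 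The leading factor is bounded below by a constant depending only on the fixed $\mathrm{PCC}$-parameter $l$ (and $c$) and on the tail of $\sum_k\#B_{n_k,2}/\#B_{n_k}$, obtained by separating $B_{n_k}$ into its concentrated part and a part of proportion $<\tfrac14$ for $k$ large. For $m\ge1$: if $n_k+m\notin I$ the scaled digit set has diameter $\le 2C\,2^{-m}$, so that factor contributes only $O(R\,2^{-m})$; if $n_k+m\in I$, split off the digits in $[0,N_{n_k+m})$, whose scaled set has diameter $\le 2^{-m}$, and bound the factor's log by $O(\#B_{n_k+m,2}/\#B_{n_k+m}+R\,2^{-m})$. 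Summing over $m$, the geometric terms give $O(R)$ and the proportions are summable by $\mathrm{RBC}$ on the subsequence, so for $R$ small and $k$ large $|\hat\nu_{>n_k-1}(\xi)|\ge\epsilon$ on $U$; the limiting/weak-$*$ argument of the earlier proofs then upgrades this to completeness of $\Lambda$.

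The delicate point is the leading factor $\hat\delta_{N_{n_k}^{-1}B_{n_k}}$ under $\mathrm{PCC}$(ii): a proportion $\ge c$ of $B_{n_k}$ in the middle of $[0,N_{n_k}]$ does not by itself keep $|\hat\delta_{N_{n_k}^{-1}B_{n_k}}|$ away from $0$ near the origin when $c\le\tfrac12$, so one must extract the correct arithmetic consequence of $\mathrm{PCC}$(ii), precisely the step already performed in the proof of Theorem~\ref{gap-condition}, which I would quote. A conceptually cleaner but more delicate route is to merge each maximal run of $\mathrm{DBC}$-indices following a $\mathrm{PCC}$-index $n_k$ with $n_k$ itself into one admissible pair $(\tilde N_k,\tilde B_k)$ via the composition property of Hadamard triples, so that $\tilde N_k=N_{n_k}M_k'$ with $M_k'$ the product of the $\mathrm{DBC}$-moduli in the block and $\tilde B_k=M_k'B_{n_k}\oplus\tilde C_k$ with $\max|\tilde C_k|\le 2CM_k'$, i.e.\ the perturbation $\tilde C_k$ occupies only a fraction $2C/N_{n_k}$ of $\tilde N_k$; then for $\mathrm{PCC}$-indices with $N_{n_k}$ large the sequence $\{(\tilde N_k,\tilde B_k)\}$ inherits $\mathrm{RBC}$ and a constant-weakened $\mathrm{PCC}$, so Theorem~\ref{gap-condition} applies directly, while $\mathrm{PCC}$-indices with $N_{n_k}$ bounded force $\#B_{n_k}$ bounded and reduce to finitely many configurations that can be absorbed into the $\mathrm{DBC}$-part. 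Reconciling the large- and bounded-modulus regimes --- in particular checking that relegating the bounded-modulus $\mathrm{PCC}$-pairs to the $\mathrm{DBC}$-side still leaves a genuine infinite $\mathrm{PCC}$-subsequence, or else lands in an already-covered situation --- is the bookkeeping the proof must carry out with care.
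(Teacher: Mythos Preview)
Your existence argument and the overall strategy (establish equi-positivity of a subsequence of $\{\nu_{>n_k-1}\}$ and invoke Theorem~\ref{equi-positive}) match the paper's. The gap is in your factor-by-factor tail estimate.

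You bound the $m$-th factor with $n_k+m\notin I$ by saying the scaled digit set has diameter $\le 2C\,2^{-m}$ and hence the factor costs $O(R\,2^{-m})$ in $-\log|\hat\nu_{>n_k-1}|$. This implicitly uses the angular-concentration bound (Proposition~\ref{gap}), which needs the phase spread to stay below $\pi$. At $m=1$ that spread is at most $4\pi C|\xi|/N_{n_k}$, and knowing only $N_{n_k}\ge2$ one needs $|\xi|<1/(2C)$. So your inequality is valid only on $(-R,R)$ with $R<1/(2C)$. But equi-positivity in the sense of Theorem~\ref{equi-positive} requires the lower bound on $[-\tfrac12-\delta,\tfrac12+\delta]$ so that integer translates cover $[0,1)$; no ``weak-$*$ upgrade'' turns a small-$R$ bound into this. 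Concretely, with $C\ge3$, $N_{n_k}=N_{n_k+1}=2$ and $B_{n_k+1}=\{0,2C-1\}$ (admissible, DBC-compatible, $n_k+1\notin I$), one has $\hat\nu_{>n_k-1}(2/(2C-1))=0$ with $2/(2C-1)\in(0,\tfrac12)$, and your estimate cannot be run there.

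The cure is precisely the large/bounded-modulus dichotomy you sketch at the end, but it is not just bookkeeping. Writing $N$ for the DBC constant, the paper splits into two cases. If only finitely many $N_{n_k}\ge 2N$, then RBC on the subsequence forces $B_{n_k}\subset[0,N_{n_k})\subset[0,2N)$ eventually, so the entire tail satisfies DBC; spectrality then comes from the separate DBC criterion of Theorem~\ref{finitely many admissible pairs 1}, an external ingredient you do not invoke (and DBC alone is not enough, cf.\ Example~\ref{ex_NE}). If infinitely many $N_{n_k}\ge 2N$, one passes to that sub-subsequence $\{m_k\}$; now the leading denominator $N_{m_k}\ge 2N$ absorbs the DBC constant, so for every $j\ge1$ and $|\xi|\le\tfrac34$ the phases $2\pi b\xi_j$ (taking $b\in B_n\pmod{N_n}$ at indices in $\{m_l\}$ and $b\in B_n$ otherwise) lie in $[-3\pi/2^{j+1},3\pi/2^{j+1}]$, and the factor-by-factor bound goes through on the required range. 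Your block-merging construction is a viable alternative for this second case, but the first case genuinely needs Theorem~\ref{finitely many admissible pairs 1}.
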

If the subsequence in above theorem consists of  general consecutive sets, the partial concentration condition (PCC) may be omitted.
\begin{corollary}\label{finitely many admissible pairs}
Given a sequence of admissible pairs $\{( N_n,B_n)\}_{n=1}^\infty $. If there exists a subsequence $\{B_{n_k}\}_{k=1}^\f$ consisting of general consecutive sets satisfying RBC,  and $\{( N_n,B_n)\}_{n\notin\{n_k\}} $ satisfies  DBC, then the infinite convolution $\mu$  given by \eqref{infinite-convolution} exists and is a spectral measure.
\end{corollary}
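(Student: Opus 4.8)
The plan is to reduce the corollary to Theorem~\ref{finitely many admissible pairs-1}: one only has to supply the existence of $\mu$ and to exhibit a subsequence of $\{(N_{n_k},B_{n_k})\}_{k=1}^\f$ satisfying PCC, the latter automatically retaining RBC since a subseries of a convergent series of nonnegative terms converges. For existence, write $\mu_n$ as the distribution of $\sum_{j=1}^n(N_j\cdots N_1)^{-1}\xi_j$ with the $\xi_j$ independent and $\xi_j$ uniform on $B_j$. For $j\notin\{n_k\}$, DBC gives $|b|\le CN_j$ for $b\in B_j$, hence $|(N_j\cdots N_1)^{-1}\xi_j|\le C/(N_{j-1}\cdots N_1)\le C2^{-(j-1)}$; for $j=n_k$, the event $\{\xi_{n_k}\in B_{n_k,2}\}$ has probability $\#B_{n_k,2}/\#B_{n_k}$, which is summable in $k$ by RBC, so by the Borel--Cantelli lemma it occurs for only finitely many $k$ almost surely, while on its complement $\xi_{n_k}\in B_{n_k,1}\subseteq[0,\#B_{n_k}-1]$ contributes a term of size $<2^{-(n_k-1)}$. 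Thus $\sum_j(N_j\cdots N_1)^{-1}\xi_j$ converges almost surely, so $\mu_n$ converges weakly; this is a hybrid of the two arguments behind Theorem~\ref{existence}.

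For the spectral part the crux is extracting a PCC subsequence. Fix $l\in(1/2,1)$. Since $(N_{n_k},B_{n_k})$ is admissible with $B_{n_k}$ a general consecutive set, $\#B_{n_k}\mid N_{n_k}$, so either $\#B_{n_k}=N_{n_k}$ or $\#B_{n_k}\le N_{n_k}/2$; let $P_2$ and $P_1$ be the corresponding index sets, one of which is infinite. If $P_1$ is infinite, then $B_{n_k}\equiv\{0,1,\dots,\#B_{n_k}-1\}\pmod{N_{n_k}}$ forces $B_{n_k}\cap[0,N_{n_k}-1]\subseteq[0,\#B_{n_k}-1]$, an interval of length $<N_{n_k}/2<lN_{n_k}$, so taking $[b_{n_k,1},b_{n_k,2}]=[0,\#B_{n_k}-1]$ yields $B_{n_k,2}^{l}\subseteq B_{n_k,2}$ and hence $\sum_{k\in P_1}\#B_{n_k,2}^{l}/\#B_{n_k}<\f$ by RBC, which is clause (i) of PCC along $P_1$. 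If $P_2$ is infinite, then each $B_{n_k}$ is a complete residue system modulo $N_{n_k}$, so $B_{n_k}\cap[0,N_{n_k}-1]$ omits at most $\#B_{n_k,2}=o(N_{n_k})$ of the integers $\{0,\dots,N_{n_k}-1\}$ by RBC, whence a fixed central block $[\tfrac{l}{2}N_{n_k},(1-\tfrac{l}{2})N_{n_k}]$ meets $B_{n_k}$ in at least $(1-l-o(1))N_{n_k}\ge c\,\#B_{n_k}$ points for all large $k$; shrinking $c$ to cover the finitely many small scales gives clause (ii) of PCC along $P_2$. The infinite one of $P_1,P_2$ is the required PCC subsequence.

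Feeding this subsequence into Theorem~\ref{finitely many admissible pairs-1} then concludes, provided its complement satisfies DBC, and this is the step I expect to be the real obstacle. The discarded indices---the finite one of $P_1,P_2$ together with the finitely many exceptional small scales---are again general consecutive sets with RBC, but their cardinalities and digits need not be bounded, so they cannot in general be absorbed into the DBC part of Theorem~\ref{finitely many admissible pairs-1}. I would get around this by observing that PCC enters the proof of that theorem only to guarantee, at infinitely many scales $n_k$, a uniform lower bound for a non-degeneracy quantity attached to the rescaled tails $\nu_{>n_k}$ of \eqref{def_nugn}, and that the $k$-th factor $(N_{n_k},B_{n_k})$ already contributes such a bound whenever it admits either a short digit-interval leaving out only a summably-small fraction of $B_{n_k}$ (as on $P_1$) or a central digit-interval retaining a fixed fraction of $B_{n_k}$ (as on $P_2$); consequently the whole general-consecutive subsequence $\{(N_{n_k},B_{n_k})\}_{k=1}^\f$, with the clause chosen scale by scale, can play the role of the PCC subsequence, nothing is thrown away, and the complement is precisely the given DBC set. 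Equivalently, one reformulates Theorem~\ref{finitely many admissible pairs-1} with this per-scale version of PCC, which costs nothing in the proof. The only computations genuinely new to this corollary are the two short concentration estimates on $P_1$ and $P_2$ above.
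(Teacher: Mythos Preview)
Your strategy coincides with the paper's: the paper's entire proof is two sentences---quote the proof of Theorem~\ref{infinitely many admissible pairs} to obtain a sub-subsequence of $\{(N_{n_k},B_{n_k})\}$ satisfying PCC, then invoke Theorem~\ref{finitely many admissible pairs-1}. Your $P_1$/$P_2$ dichotomy is exactly the split in that proof (and your claim $B_{n_k,1}\subseteq[0,\#B_{n_k}-1]$ is correct precisely because the $B_{n_k}$ are general consecutive sets).

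The subtlety you flag is real and the paper glosses over it: if one passes to the PCC sub-subsequence and applies Theorem~\ref{finitely many admissible pairs-1} as a black box, the complement now contains the discarded general-consecutive indices, which need not satisfy DBC. Your proposed remedy---keep the \emph{entire} subsequence $\{(N_{n_k},B_{n_k})\}$ and allow the PCC clause to be chosen scale by scale---is the right one, and costs nothing in the proof: the lower bound on $|M_{B_{m_k}}(\xi_0)|$ in the proofs of Theorems~\ref{gap-condition} and~\ref{finitely many admissible pairs-1} is obtained one scale at a time via \eqref{=0} or \eqref{=0_1}, and the required smallness of $\#B_{n_k,2}^{l}/\#B_{n_k}$ (for clause~(i) scales) and of $\#B_{n_k,2}/\#B_{n_k}$ (for clause~(ii) scales) both follow from RBC on $\{n_k\}$ for all large $k$. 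With that reading, the complement is exactly the given DBC set and Theorem~\ref{finitely many admissible pairs-1} (more precisely, its proof) applies cleanly. Your existence argument via Borel--Cantelli is a legitimate alternative to the paper's Corollary~\ref{existence_1}.
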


 The rest of the paper is organized as follows. In Section~\ref{sec_exic}, we  provide the proof of Theorem \ref{existence}, and we cite some results on equi-positive families which are the key tools to prove the spectrality of measures. In Section~\ref{sec_im}, we give the proofs of Theorem~\ref{infinitely many admissible pairs} and Theorem \ref{gap-condition}. In Section \ref{sec_fm},  we prove Theorem \ref{finitely many admissible pairs-1} and Corollary \ref{finitely many admissible pairs}. Finally, we provide some examples in Section \ref{sec_ex}.

\section{Existence of infinite convolutions and equi-positive families}\label{sec_exic}
\subsection{Existence of infinite convolutions}
In this subsection, we prove Theorem \ref{existence} which provides some sufficient conditions for the convergence of infinite convolutions
 
 For a set $B\subset \Z$, we write $M_B(\xi)$ for the Fourier transform of the discrete measure $\delta_B$, that is,
\[M_B(\xi)=\frac{1}{\# B}\sum_{b\in B}e^{-2\pi i b \xi}. \]
Hence, the Fourier transform of the infinite convolution $\mu$ given by \eqref{infinite-convolution} is
\[\hat{\mu}(\xi)=\prod_{n=1}^{\infty}M_{B_n}\left(\frac{\xi}{N_1N_2\cdots N_n}\right).\]

We write $ \mathcal{P}(\R^d)$ for the set of all Borel probability measures on $ \R^d$.
For $\eta \in \mathcal{P}(\R^d)$, we define $$ E(\eta) = \int_{\R^d} x \D \eta(x), \qquad V(\eta) = \int_{\R^d} |x - c(\eta)|^2 \D \eta(x). $$
It is easy to check that
$$ V(\eta) = \int_{\R^d} |x|^2 \D \eta(x) - |E(\eta)|^2. $$
Given $r>0$, we define a new Borel probability measure $\eta_r$ by
\begin{equation}\label{eta_r}
	\eta_r(E) = \eta\big( E \cap D(r) \big) + \eta\big( \R^d \setminus D(r) \big)\delta_0(E),
\end{equation}
for every Borel subset $E \sse \R^d$,  where $D(r)$ is the closed ball with centre at $0$ and radius $r$.

To prove Theorem \ref{existence}, we need the following theorem which may be regarded as a distribution  version of Kolmogorov’s three series theorem, see \cite{CYS78,Jessen-Wintner-1935} for details.
\begin{theorem}\label{three-series-theorem}
	Let $\eta_1, \eta_2, \cdots$ be a sequence of Borel probability measures on $\R^d$.
	Fix a constant $r>0$, and let $\eta_{n,r}$ be defined by \eqref{eta_r} for each measure $\eta_n$.
	Then the infinite convolution $\eta_1 *\eta_2 *\cdots $ exists if and only if the following three series all converge:
	$$ (i) \sum_{n=1}^{\f} \eta_n\big( \R^d \setminus D(r)\big),\quad (ii) \sum_{n=1}^{\f} E(\eta_{n,r}), \quad (iii) \sum_{n=1}^{\f} V(\eta_{n,r}). $$
\end{theorem}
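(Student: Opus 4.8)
The plan is to recast the statement probabilistically and reduce it to the classical Kolmogorov three-series theorem. On a suitable probability space, choose independent random vectors $X_1, X_2, \dots$ with $X_n$ distributed according to $\eta_n$. Then the partial sum $S_n = X_1 + \cdots + X_n$ has law $\mu_n := \eta_1 * \cdots * \eta_n$, so by definition the infinite convolution $\eta_1 * \eta_2 * \cdots$ exists if and only if $(S_n)$ converges in distribution.

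The first real task is to upgrade convergence in distribution of $(S_n)$ to almost sure convergence of $(S_n)$. The implication ``a.s. convergence $\Rightarrow$ convergence in distribution'' is immediate. For the converse one exploits that $(S_n)$ is the sequence of partial sums of an independent sequence: if $(S_n)$ converges in distribution, then $\{\mathrm{law}(S_n)\}$ is tight, and an Ottaviani--L\'evy maximal inequality applied to the independent increments $X_{n+1}, \dots, X_m$ forces $\sup_{m>n} |S_m - S_n| \to 0$ in probability as $n \to \infty$; hence $(S_n)$ is Cauchy in probability, so converges in probability, and a second application of the L\'evy maximal inequality promotes this to almost sure convergence (this is the It\^o--Nisio theorem specialised to $\R^d$). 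Consequently ``the infinite convolution exists'' is equivalent to ``$\sum_n X_n$ converges almost surely.''

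Next I would invoke the Kolmogorov three-series theorem for the $\R^d$-valued independent sequence $(X_n)$ with the given truncation radius $r$: $\sum_n X_n$ converges a.s. if and only if the three series
$$\sum_{n}\PP(|X_n|>r),\qquad \sum_n \E\big[X_n\,\mathbf{1}_{\{|X_n|\le r\}}\big],\qquad \sum_n \mathrm{Var}\big(X_n\,\mathbf{1}_{\{|X_n|\le r\}}\big)$$
converge, and moreover convergence for one value of $r$ is equivalent to convergence for every $r > 0$. It then remains to match these with the series in the statement. Plainly $\PP(|X_n| > r) = \eta_n(\R^d \setminus D(r))$, which is series (i). The truncated vector $Y_n := X_n\,\mathbf{1}_{\{|X_n| \le r\}}$ equals $X_n$ on $\{|X_n| \le r\}$ and equals $0$ on the complement, so its law is exactly the measure $\eta_{n,r}$ from \eqref{eta_r}; hence $\E[Y_n] = E(\eta_{n,r})$, and using $V(\eta) = \int |x|^2 \D\eta - |E(\eta)|^2$ we get $\mathrm{Var}(Y_n) = V(\eta_{n,r})$. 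This identifies the second and third series with (ii) and (iii), completing the argument.

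I expect the only genuine obstacle to be the equivalence used in the second paragraph between weak convergence of the convolution products and almost sure convergence of the partial sums; the rest is bookkeeping plus a direct citation of Kolmogorov's theorem. One minor point to dispatch carefully is that the three-series theorem is usually stated on $\R$: for $\R^d$ one either argues coordinatewise after noting that convergence of $\sum_n X_n$ in $\R^d$ is equivalent to convergence of each coordinate series, or appeals to the standard vector-valued version; note also that the closed ball $D(r)$ is precisely the classical truncation set $\{|X_n| \le r\}$, so no adjustment of the truncation is needed.
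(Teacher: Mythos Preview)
The paper does not supply its own proof of this theorem: it is stated as a known result (``a distribution version of Kolmogorov's three series theorem'') with a citation to Chow--Teicher and Jessen--Wintner, and is then used as a black box in the proof of Theorem~\ref{existence}. So there is no in-paper argument to compare against.

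Your sketch is correct and is essentially the standard argument one would extract from those references. The two ingredients you isolate --- the L\'evy/It\^o--Nisio equivalence (for partial sums of independent $\R^d$-valued random vectors, weak convergence implies almost sure convergence) and the classical three-series criterion --- are exactly what is needed, and your identification of the truncated law with $\eta_{n,r}$ and of $E(\eta_{n,r})$, $V(\eta_{n,r})$ with the mean and scalar variance of $Y_n = X_n\mathbf{1}_{\{|X_n|\le r\}}$ is accurate. The one place to be careful, as you note, is the passage to $\R^d$: the coordinatewise reduction is awkward because coordinate truncation at level $r$ does not match ball truncation at level $r$, so the cleaner route is to run the Kolmogorov proof directly in $\R^d$ (Borel--Cantelli for series~(i); Kolmogorov's maximal inequality in $\R^d$ for sufficiency of (ii)--(iii); symmetrisation for necessity), all of which go through verbatim with $|\cdot|$ the Euclidean norm. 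With that adjustment your outline constitutes a complete proof.
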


Now, we are ready to prove Theorem \ref{existence}, and the idea for the proof is from~\cite{Jessen-Wintner-1935}, where Jessen and Wintner studied the convergence of infinite convolutions in a general setting, and they gave sufficient and necessary conditions for the convergence of infinite convolutions.

\begin{proof}[Proof of Theorem \ref{existence}]
For simplicity, We write $\delta_n=\delta_{(N_1N_2\cdots N_n)^{-1}B_n}$ and $\delta_{n,r}(E)= \delta_n( E \cap D(r) ) + \delta_n( \R \setminus D(r) )\delta_0(E)$ where $D(r)$ is the closed ball with centre at $0$ and radius $r$.

To prove the existence of $\mu$,  by Theorem \ref{three-series-theorem}, it is sufficient to prove the following three series are convergent for $r=1$:
$$
(i) \sum_{n=1}^{\f} \delta_n\big( \R \setminus D(1)\big),\quad (ii) \sum_{n=1}^{\f} E(\delta_{n,1}), \quad (iii) \sum_{n=1}^{\f} V(\delta_{n,1}). 
$$

Since 
\begin{eqnarray*}
&&\sum_{n=1}^{\f} |E(\delta_{n,1})|\le\sum_{n=1}^{\f}\int_{\R} |x| \D \delta_{n,1}(x) <\f;\\
&&\sum_{n=1}^{\f} V(\delta_{n,1})\le\sum_{n=1}^{\f}\int_{\R} |x|^2 \D \delta_{n,1}(x)\le\sum_{n=1}^{\f}\int_{\R} |x| \D \delta_{n,1}(x)<\f,
\end{eqnarray*}
it is sufficient to prove the following two series are convergent for $r=1$:
$$
 (i) \sum_{n=1}^{\f} \delta_n\big( \R \setminus D(1)\big),\quad (ii) \sum_{n=1}^{\f}\int_{\R} |x| \D \delta_{n,1}(x). 
$$

If $\{( N_n,B_n)\}_{n=1}^\infty $ satisfies   RBC, i.e.$\sum_{n=1}^{\infty} \frac{\# B_{n,2}}{\# B_n} < \infty,$
where $B_{n,1}=B_n \cap \{0,1,\cdots,N_n-1\}$ and $B_{n,2}=B_n \backslash B_{n,1}$,  immediately  the following four series are convergent,
\begin{eqnarray*}
&&\sum_{n=1}^{\f} \delta_n\big( \R \setminus D(1)\big)\le\sum_{n=1}^{\infty} \frac{\# B_{n,2}}{\# B_n} < \infty; \\
&&\sum_{n=1}^{\f}\int_{\R} |x| \D \delta_{n,1}(x)\le\sum_{n=1}^{\f}\sum_{b\in B_{n},|b|<N_n}\bigg|\frac{(N_1N_2\cdots N_n)^{-1}b}{\# B_n}\bigg|\le1+\sum_{n=2}^{\f}\frac{1}{N_1N_2\cdots N_{n-1}} <\f.
\end{eqnarray*}
Therefore, the infinite convolution $\mu$ exists if $\{( N_n,B_n)\}_{n=1}^\infty $ satisfies  RBC.

If $\{( N_n,B_n)\}_{n=1}^\infty $ satisfies $\sum_{n=1}^{\f}\frac{\max\{|b|:b\in B_n\}}{N_1N_2\cdots N_n}<\f$,  there exists an integer $n_0>0$ such that  $\frac{\max\{|b|:b\in B_n\}}{N_1N_2\cdots N_n}<1$ for all $n>n_0$.  This implies that  $\delta_n( \R \setminus D(1))=0$ for all $n>n_0$. Hence we obtain that
\begin{eqnarray*}
&&\sum_{n=1}^{\f} \delta_n\big( \R \setminus D(1)\big)\le\sum_{n=1}^{n_0} \delta_n\big( \R \setminus D(1)\big)< \infty; \\
&&\sum_{n=1}^{\f}\int_{\R} |x| \D \delta_{n,1}(x)\le\sum_{n=1}^{\f}\sum_{b\in B_{n}}\bigg|\frac{(N_1N_2\cdots N_n)^{-1}b}{\# B_n}\bigg|\le \sum_{n=1}^{\f}\frac{\max\{|b|:b\in B_n\}}{N_1N_2\cdots N_n}<\f.
\end{eqnarray*}
Therefore, the infinite convolution $\mu$ exists if $\sum_{n=1}^{\f}\frac{\max\{b:b\in B_n\}}{N_1N_2\cdots N_n}<\f$.

Finally, suppose that there exists $r_0>0$ such that $\sum_{n=1}^{\infty} \frac{\# \{b:\ b\in B_n,\ |b|>N_1N_2\cdots N_nr_0\}}{\# B_n} = \infty$. Fix $r=r_0$, then we have
\begin{align*}
\sum_{n=1}^{\f} \delta_n\big( \R \setminus D(r_0)\big)=\sum_{n=1}^{\infty} \frac{\# \{b:\ b\in B_n,\ |b|>N_1N_2\cdots N_nr_0\}}{\# B_n} = \infty.
\end{align*}
Therefore, by Theorem \ref{three-series-theorem}, the infinite convolution $\mu$ does not exist.
\end{proof}

Recall that $\{( N_n,B_n)\}_{n=1}^\infty $ satisfies  digit bounded condition (DBC) if
\[
\sup_n\{N_n^{-1}|b|, \ b\in B_n\}<\f \quad \text{and} \quad \sup_n \# B_n<\f.
\]
In order to obtain the existence of infinite convolutions, we show that it is not necessary to require the whole   sequence satisfying RBC.
\begin{corollary}\label{existence_1}
		Given $\{( N_n,B_n)\}_{n=1}^\infty $ with a subsequence $\{(N_{n_k}B_{n_k})\}$ satisfying {\it RBC} and  $\{( N_n,B_n)\}_{n\notin\{n_k\}} $ satisfying {\it DBC}. Then the infinite convolution $\mu$ exists.
\end{corollary}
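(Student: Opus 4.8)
The plan is to apply Theorem~\ref{three-series-theorem} with $r=1$, exactly as in the proof of Theorem~\ref{existence}, but organized around the partition of $\N$ into $\{n_k\}_k$ and its complement. Write $\delta_n=\delta_{(N_1N_2\cdots N_n)^{-1}B_n}$ and let $\delta_{n,1}$ be its truncation to $D(1)$ with the overflow mass placed at $0$. Just as in the proof of Theorem~\ref{existence}, the estimates $|E(\delta_{n,1})|\le\int_{\R}|x|\D\delta_{n,1}(x)$ and $V(\delta_{n,1})\le\int_{\R}|x|^2\D\delta_{n,1}(x)\le\int_{\R}|x|\D\delta_{n,1}(x)$ reduce the problem to showing that the two series
\[
\sum_{n=1}^{\f}\delta_n\big(\R\setminus D(1)\big)
\qquad\text{and}\qquad
\sum_{n=1}^{\f}\int_{\R}|x|\D\delta_{n,1}(x)
\]
converge. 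Since all summands are nonnegative, each series converges if and only if its partial sum over $\{n_k\}_k$ and its partial sum over $\{n:n\notin\{n_k\}\}$ both converge, so I would bound the two index sets separately.

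For $n\in\{n_k\}$ the subsequence satisfies RBC, and the argument of Theorem~\ref{existence} applies with only a minor bookkeeping change: every $b\in B_{n_k,1}$ satisfies $0\le b<N_{n_k}$, so $|(N_1\cdots N_{n_k})^{-1}b|<(N_1\cdots N_{n_k-1})^{-1}\le1$; hence only digits in $B_{n_k,2}$ can leave $D(1)$, which gives $\delta_{n_k}(\R\setminus D(1))\le\#B_{n_k,2}/\#B_{n_k}$, and also $\int_{\R}|x|\D\delta_{n_k,1}(x)\le(N_1\cdots N_{n_k-1})^{-1}+\#B_{n_k,2}/\#B_{n_k}$. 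Both are summable in $k$ because $\sum_k\#B_{n_k,2}/\#B_{n_k}<\f$ by RBC, while $\sum_{m\ge1}(N_1\cdots N_{m-1})^{-1}<\f$ as already observed in the proof of Theorem~\ref{existence}.

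For $n\notin\{n_k\}$ I would invoke DBC: there is $M<\f$ with $|b|\le MN_n$ for all $b\in B_n$, hence $|(N_1\cdots N_n)^{-1}b|\le M(N_1\cdots N_{n-1})^{-1}$ for every such digit. Therefore $\int_{\R}|x|\D\delta_{n,1}(x)\le\int_{\R}|x|\D\delta_n(x)\le M(N_1\cdots N_{n-1})^{-1}$, which is summable; and since $N_1\cdots N_{n-1}\to\f$, we have $M(N_1\cdots N_{n-1})^{-1}<1$ for all but finitely many such $n$, so $\delta_n(\R\setminus D(1))=0$ for those $n$ and the corresponding series has only finitely many nonzero terms. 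Combining the two index sets, both series above converge, and Theorem~\ref{three-series-theorem} yields the existence of $\mu$.

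I do not expect a genuine obstacle here: the statement is essentially the proof of Theorem~\ref{existence} run on each block of the partition. The only point deserving a moment's attention is that DBC controls $N_n^{-1}|b|$ rather than the global scale $(N_1\cdots N_n)^{-1}|b|$, but this is harmless because the prefactor $(N_1\cdots N_{n-1})^{-1}$ already decays geometrically, so the DBC indices contribute only a convergent tail to the second series and only finitely many nonzero terms to the first.
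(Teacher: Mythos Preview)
Your proposal is correct and is exactly the argument the paper intends: the paper's own proof of this corollary is the single sentence ``It follows from the same argument as the proof of Theorem~\ref{existence},'' and what you have written is precisely that argument carried out with the index set split into the RBC block and the DBC block.
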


\begin{proof}[Proof of Corollary \ref{existence_1}]
	It follows from the same argument as  the proof of Theorem \ref{existence}.
\end{proof}

\subsection{Equi-positive families}
Equi-positive families are frequently used to study the spectrality of measures, and we recall some related results on equi-positive families, which are  important tools to study the spectrality of infinite convolutions in our work.

Let $\Phi$ be a set of probability measures supported on $\R$. We call $\Phi$ an equi-positive family if there exist $\epsilon_0>0$ and $\delta_0>0$ such that for all $x\in[0,1) $ and $\mu \in \Phi$, there exists an integer $k_{x,\mu }\in \Z$ such that
	\[| \hat{\mu}(x+y+k_{x,\mu })| \ge \epsilon_0,\]
	for all $|y|< \delta_0$, where $k_{x,\mu }=0$ for $x=0$.

The equi-positive family was used to study the spectrality of singular measures with compact support in \cite{An-Fu-Lai-2019,Dutkay-Haussermann-Lai-2019}, and it was then generalized in~\cite{LMW-2023} to the current version which is also applicable to infinite convolutions without compact support. The following theorem shows that equi-postive families imply spectrality which is a useful tool in studying the spectral theory of  these measures, see ~\cite{An-Fu-Lai-2019,Dutkay-Haussermann-Lai-2019,LMW-2023} for the proof.
\begin{theorem}\label{equi-positive}
	 Let $\{( N_n,B_n)\}_{n=1}^\infty $ be a sequence of admissible pairs. Suppose that the infinite convolution $\mu$ exists. If there exists a subsequence $\{\nu_{>n_k}\}_{k=1}^\infty$ which is an equi-positive family, then $\mu$ is a spectral measure.
\end{theorem}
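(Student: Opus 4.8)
The plan is to follow the equi-positive-family machinery of \cite{An-Fu-Lai-2019,Dutkay-Haussermann-Lai-2019}, in the form adapted to non-compactly supported measures in \cite{LMW-2023}, and to exhibit an explicit spectrum of $\mu$.

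\emph{Candidate spectrum.} Since each $(N_n,B_n)$ is an admissible pair, fix $L_n\sse\Z$ with $(N_n,B_n,L_n)$ a Hadamard triple; after translating $L_n$ (which does not affect the Hadamard property) we may assume $0\in L_n$, and after reducing each element modulo $N_n$ (legitimate because $M_{B_n}$ is $\Z$-periodic and two columns of \eqref{unimat} coincide only inside a residue class) we may assume $L_n\sse[0,N_n)$. Put $\Lambda_n=L_1+N_1L_2+\cdots+N_1N_2\cdots N_{n-1}L_n\sse[0,N_1\cdots N_n)$. A direct computation with \eqref{unimat} gives $\widehat{\mu}_n(\lambda-\lambda')=0$ for distinct $\lambda,\lambda'\in\Lambda_n$, and since an orthonormal set of size $\#\Lambda_n=\prod_{j\le n}\#B_j$ in $L^2(\mu_n)$ forces $\mu_n$ to have exactly $\prod_{j\le n}\#B_j$ atoms (so Hadamard triples have no overlap at finite levels), $\Lambda_n$ is a spectrum of the discrete measure $\mu_n$; moreover $\Lambda_n\sse\Lambda_{n+1}$ because $0\in L_n$. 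The spectrum of $\mu$ will be $\Lambda=\bigcup_k\Lambda_{n_k}=\bigcup_n\Lambda_n$, but with the block digits chosen adaptively: in passing from $\Lambda_{n_{k-1}}$ to $\Lambda_{n_k}$ each $\lambda\in\Lambda_{n_{k-1}}$ may be extended by its own spectrum of the corresponding block convolution, and orthogonality of the new finite set survives this (distinct $\lambda$'s decouple since $\widehat{\mu}_{n_{k-1}}$ is $(N_1\cdots N_{n_{k-1}})$-periodic, while for a fixed $\lambda$ the vanishing comes from using a genuine spectrum of the block). Shifting the top digit of a block element by a multiple of $N_{n_{k-1}+1}\cdots N_{n_k}$ leaves it a valid block-spectrum element and shifts $(\,\cdot\,)/(N_1\cdots N_{n_k})$ by an arbitrary integer; using this freedom together with the equi-positivity constants $\ep_0,\delta_0$ of $\{\nu_{>n_k}\}$, I would arrange that for every $k$ and every $\lambda\in\Lambda_{n_k}$ the point $(\xi+\lambda)/(N_1\cdots N_{n_k})$ lies, modulo $\Z$, in a $\delta_0$-window on which $|\widehat{\nu}_{>n_k}|\ge\ep_0$, uniformly for $\xi$ in any prescribed bounded set (once $N_1\cdots N_{n_k}$ is large the term $\xi/(N_1\cdots N_{n_k})$ is absorbed by $\delta_0$).

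\emph{Orthogonality and reduction.} Any two distinct points of $\Lambda$ lie in a common $\Lambda_n$, so $\widehat{\mu}_n$ vanishes on their difference, and $\widehat{\mu}=\widehat{\mu}_n\widehat{\mu}_{>n}$ forces $\widehat{\mu}$ to vanish there as well; hence $\{e_\lambda\}_{\lambda\in\Lambda}$ is orthonormal in $L^2(\mu)$. By the Jorgensen--Pedersen criterion \cite{Jorgensen-Pedersen-1998}, such a $\Lambda$ is a spectrum of $\mu$ if and only if $Q_\Lambda(\xi):=\sum_{\lambda\in\Lambda}|\widehat{\mu}(\xi+\lambda)|^2\equiv1$; orthogonality already gives $Q_\Lambda\le1$ with $Q_\Lambda(0)=1$, so the task is to prove $Q_\Lambda\ge1$. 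Writing $\mu=\mu_{n_k}*\mu_{>n_k}$ and using that $\Lambda_{n_k}$ is a spectrum of $\mu_{n_k}$ yields the transfer identity
\[
Q_\Lambda(\xi)=\sum_{\lambda\in\Lambda_{n_k}}\bigl|\widehat{\mu}_{n_k}(\xi+\lambda)\bigr|^2\,Q^{(>n_k)}\!\Bigl(\tfrac{\xi+\lambda}{N_1\cdots N_{n_k}}\Bigr),\qquad\sum_{\lambda\in\Lambda_{n_k}}\bigl|\widehat{\mu}_{n_k}(\xi+\lambda)\bigr|^2=1,
\]
where $Q^{(>n_k)}$ is the analogous sum for $\nu_{>n_k}$ against the tail part of $\Lambda$. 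Since $Q^{(>n_k)}\ge|\widehat{\nu}_{>n_k}|^2$ and the adaptive steering places each argument in an $\ep_0$-window, every summand is $\ge\ep_0^2$, so $Q_\Lambda\ge\ep_0^2$ everywhere.

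\emph{The main obstacle.} The crux is to upgrade the uniform lower bound $Q_\Lambda\ge\ep_0^2$ to $Q_\Lambda\equiv1$; a positive lower bound by itself does not suffice (for instance $2\Z$ is an orthogonal frequency set for Lebesgue measure on $[0,1]$ with $Q$ bounded away from $0$ that is not a spectrum, so the passage to $\equiv1$ must genuinely exploit the structure of the constructed $\Lambda$). This is precisely where the adaptive construction and the non-compact refinements of \cite{LMW-2023} are needed: one analyses the exceptional set $Z=\{\xi:Q_\Lambda(\xi)<1\}$, and, using the one-block form
\[
1-Q_\Lambda(\xi)=\sum_{\ell\in L_1}\bigl|M_{B_1}(\tfrac{\xi+\ell}{N_1})\bigr|^2\bigl(1-Q^{(>1)}(\tfrac{\xi+\ell}{N_1})\bigr)
\]
and its iterates, shows that a point of $Z$ propagates down the tails to points lying in the $\delta_0$-windows forced by the construction, where the $\delta_0$-uniform positivity (which replaces continuity of $Q$ in the non-compact setting) together with equi-positivity forbids $Q$ from dropping below $1$; this contradiction gives $Z=\varnothing$, i.e. $Q_\Lambda\equiv1$, whence $\mu$ is spectral. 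I would carry out this last step by invoking the equi-positive-family results cited after the statement rather than reproving them, since they are exactly the tool the theorem is meant to package.
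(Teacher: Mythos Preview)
The paper does not give an in-text proof of this theorem; it is quoted as a known tool, with the proof deferred to \cite{An-Fu-Lai-2019,Dutkay-Haussermann-Lai-2019,LMW-2023} (the non-compact version being precisely the contribution of \cite{LMW-2023}). In that sense your closing move---``invoking the equi-positive-family results cited after the statement''---is exactly what the paper itself does, so there is no divergence of approach to report.

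That said, if your intent was to supply an actual argument rather than a citation, the proposal has a genuine gap. Your final step is circular: the cited results \emph{are} the present theorem, so deferring to them is not a proof. The preceding sketch is on the right track---the adaptive construction of $\Lambda$ via integer shifts of the top digit, the Jorgensen--Pedersen reduction, and the identification of the upgrade $Q_\Lambda\ge\ep_0^2\Rightarrow Q_\Lambda\equiv1$ as the crux all match the strategy in the references---but two points would need repair before it could stand on its own. First, the transfer identity you display presumes $\Lambda$ factors as $\Lambda_{n_k}+N_1\cdots N_{n_k}\Lambda^{(>n_k)}$ with a \emph{single} tail set, whereas the adaptive integer shifts you describe depend on the particular $\lambda\in\Lambda_{n_k}$; the recursion therefore involves a $\lambda$-indexed family of tail functions, not one $Q^{(>n_k)}$. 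Second, your propagation argument on the exceptional set $Z$ is only a heuristic; the actual mechanism in \cite{LMW-2023} exploits the $\delta_0$-margin of the equi-positive definition at \emph{every} stage of the construction (this is what replaces continuity of $Q_\Lambda$ in the non-compact setting), and your outline does not make that quantitative use of $\delta_0$ precise.
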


In~\cite{Liu-Lu-Zhou}, Liu, Lu and Zhou gave an equivalent description for the equi-positive family  under   DBC,  and  we use the equivalence to prove the spectrality of infinite convolutions under   DBC. In fact, they require that $B_n\subseteq [0,+\f)$ for all $n\ge1$, however, when the DBC is satisfied, removing this requirement does not affect the conclusion.

\begin{theorem}\label{finitely many admissible pairs 1}
	 Given a sequence of admissible pairs $\{( N_n,B_n)\}_{n=1}^\infty $ satisfying {\it DBC}, let $\nu_{>n}$ be given by \eqref{def_nugn}. Then the following statements are equivalent:
	 \begin{enumerate}[(i)]
	 	\item   there exists an equi-positive sequence $\{(\nu_{>n_j})\}_{j=1}^\infty$;
	 	\item  at least one of the following two conditions is satisfied: $\# \{n:N_n>\# B_n\}=\f$ or $\gcd_{n>k}\{\gcd (B_n-B_n)\}=1$ for all $k\ge1$.
	 \end{enumerate}
\end{theorem}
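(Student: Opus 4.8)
The plan is to deduce the equivalence from the analogous result of Liu, Lu and Zhou \cite{Liu-Lu-Zhou}, in which the extra hypothesis $B_n\subseteq[0,+\f)$ is imposed, by reducing to that case through an integer translation of each digit set. The substance of the statement is already contained in \cite{Liu-Lu-Zhou} --- where, roughly, $(ii)\Rightarrow(i)$ is the genuine construction of an equi-positive family (split according to the two alternatives in $(ii)$, using that the zeros of each $M_{B_n}$ are controlled rationals), and $(i)\Rightarrow(ii)$ is a contrapositive argument showing that if $N_n=\#B_n$ for all large $n$ (note $\#B_n\le N_n$ always, since $B_n,L_n\subseteq\Z$ forces the rows of \eqref{unimat} indexed by distinct residues mod $N_n$) and some $d\ge 2$ divides every $B_n-B_n$ for $n$ large, then the moduli $|\hat{\nu}_{>n}|$ vanish along a set that destroys equi-positivity. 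What we actually have to verify here is only that replacing $B_n$ by a translate $B_n-t_n$ with $t_n\in\Z$ leaves invariant each of the three ingredients in the statement: the admissible pair structure, DBC, and equi-positivity.

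Concretely, I would fix $t_n=\min B_n$ and set $B_n'=B_n-t_n\subseteq[0,+\f)$. First, $(N_n,B_n')$ is again an admissible pair: if $L_n$ realizes the unitary matrix \eqref{unimat} for $(N_n,B_n)$, then with the same $L_n$ the matrix for $(N_n,B_n')$ is obtained from the old one by right multiplication by the diagonal unitary $\bigl(\,e^{2\pi i(N_n^{-1}t_n)\ell}\,\bigr)_{\ell\in L_n}$, hence is still unitary. Second, DBC is preserved: for $b\in B_n$ one has $|b-\min B_n|\le \max B_n-\min B_n\le 2\max_{b\in B_n}|b|$, so $\sup_n N_n^{-1}\max_{b'\in B_n'}|b'|\le 2\sup_n N_n^{-1}\max_{b\in B_n}|b|<\f$, and $\#B_n'=\#B_n$ gives $\sup_n\#B_n'<\f$. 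Third, condition $(ii)$ involves only $N_n$, $\#B_n$ and the difference sets $B_n-B_n$, and $B_n'-B_n'=B_n-B_n$, so $(ii)$ holds for $\{(N_n,B_n')\}$ if and only if it holds for $\{(N_n,B_n)\}$.

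It remains to observe that condition $(i)$ is translation invariant too. Under DBC the infinite product $\prod_{j\ge1}M_{B_{n+j}}\bigl(\xi/(N_{n+1}\cdots N_{n+j})\bigr)$ converges uniformly on compact sets and equals $\hat{\nu}_{>n}(\xi)$; equivalently, $\nu_{>n}'$ is the push-forward of $\nu_{>n}$ under translation by $s_n=\sum_{j\ge1}(N_{n+1}\cdots N_{n+j})^{-1}t_{n+j}$ (the series converges under DBC), so $\hat{\nu}_{>n}'(\xi)=e^{-2\pi i s_n\xi}\hat{\nu}_{>n}(\xi)$ and hence $|\hat{\nu}_{>n}'|\equiv|\hat{\nu}_{>n}|$ on $\R$. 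Since equi-positivity of a sequence $\{\nu_{>n_j}\}$ is defined purely in terms of the moduli $|\hat{\nu}_{>n_j}|$, a sequence $\{\nu_{>n_j}\}$ is equi-positive for $\{(N_n,B_n)\}$ exactly when the corresponding sequence is equi-positive for $\{(N_n,B_n')\}$. Applying \cite{Liu-Lu-Zhou} to $\{(N_n,B_n')\}$, which satisfies $B_n'\subseteq[0,+\f)$ together with DBC, yields $(i)\iff(ii)$ for the translated sequence, and therefore for the original one. The only genuinely delicate points are the bookkeeping that the DBC constant survives the translation (handled by the diameter estimate above) and a careful reading of \cite{Liu-Lu-Zhou} to confirm that the hypothesis $B_n\subseteq[0,+\f)$ is used there solely for this normalization; once these are settled the argument is routine.
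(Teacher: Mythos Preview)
Your proposal is correct and matches the paper's approach: the paper does not prove this theorem but cites it from \cite{Liu-Lu-Zhou}, remarking only that ``they require that $B_n\subseteq[0,+\f)$ for all $n\ge1$, however, when the DBC is satisfied, removing this requirement does not affect the conclusion.'' Your translation argument is precisely a detailed justification of that one-line remark, and the invariance checks (admissible pair structure, DBC, difference sets, and the modulus $|\hat{\nu}_{>n}|$) are all correct; in particular, since $B_n\subseteq\Z$ for admissible pairs, $t_n=\min B_n\in\Z$ and your diagonal-unitary computation goes through.
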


\section{Infinite convolutions generated by infinitely many admissible pairs}\label{sec_im}

In this section, we study the spectrality of infinite convolutions generated by infinitely many admissible pairs. We first prove Theorem \ref{gap-condition} and then use  it to prove  Theorem~\ref{infinitely many admissible pairs}.

The following propositions are used in the proof of  Theorem \ref{gap-condition}.
\begin{proposition}\label{gap}
	Given $\theta\in(0,\pi)$. There exists a constant $r(\theta)\in (0,1)$ such that
	\[\Big|\frac{1}{m}\sum_{j=1}^{m}e^{-ix_j}\Big|\ge r(\theta),\]
for all $x_j\in[0,\theta]$, $j=1,\dots,m$, for all $m\in \N$.
\end{proposition}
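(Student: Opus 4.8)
The plan is to exploit the fact that each summand $e^{-ix_j}$ is a unit vector whose argument lies in the arc $[-\theta,0]$; since $\theta<\pi$, this arc is contained in an open half-plane, so the vectors cannot cancel. Concretely, I would rotate by the bisecting angle: multiplying the sum by $e^{i\theta/2}$ does not change its modulus, and
$$
e^{i\theta/2}\cdot\frac{1}{m}\sum_{j=1}^{m}e^{-ix_j}=\frac{1}{m}\sum_{j=1}^{m}e^{i(\theta/2-x_j)}.
$$
For each $j$ we have $\theta/2-x_j\in[-\theta/2,\theta/2]\subset(-\pi/2,\pi/2)$, because $\theta/2<\pi/2$. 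This is the only place where the hypothesis $\theta<\pi$ is used.

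Next I would pass to the real part. Since the modulus of a complex number dominates its real part,
$$
\Big|\frac{1}{m}\sum_{j=1}^{m}e^{-ix_j}\Big|=\Big|\frac{1}{m}\sum_{j=1}^{m}e^{i(\theta/2-x_j)}\Big|\ge\mathrm{Re}\Big(\frac{1}{m}\sum_{j=1}^{m}e^{i(\theta/2-x_j)}\Big)=\frac{1}{m}\sum_{j=1}^{m}\cos(\theta/2-x_j).
$$
Because cosine is even and decreasing on $[0,\pi/2]$, we get $\cos(\theta/2-x_j)\ge\cos(\theta/2)>0$ for every $j$, so the average is at least $\cos(\theta/2)$. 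Hence the claim holds with $r(\theta)=\cos(\theta/2)$, which lies in $(0,1)$ since $\theta/2\in(0,\pi/2)$; note the bound is uniform in $m$ and in the choice of the points $x_j$.

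There is essentially no serious obstacle here: the only points requiring care are that the rotation angle $\theta/2$ is chosen so that all shifted arguments $\theta/2-x_j$ land in $(-\pi/2,\pi/2)$, and that the resulting constant $\cos(\theta/2)$ is strictly between $0$ and $1$. One could instead argue geometrically — the average lies in the convex hull of the arc $\{e^{-it}:t\in[0,\theta]\}$, which is disjoint from a fixed neighbourhood of the origin — but the rotation-and-real-part computation is the cleanest and makes the explicit constant transparent.
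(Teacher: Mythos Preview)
Your proof is correct and follows essentially the same approach as the paper: rotate by the bisecting angle $\theta/2$, bound the modulus below by the real part, and use $\cos(\theta/2-x_j)\ge\cos(\theta/2)$ to obtain $r(\theta)=\cos(\theta/2)$. The paper's argument is identical up to the sign convention in the rotation (it writes $\cos(x_j-\theta/2)$ instead of $\cos(\theta/2-x_j)$, which is the same by evenness).
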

\begin{proof}
Given $\theta\in(0,\pi)$, for all $x_j\in[0,\theta]$, $j=1,\dots,m$, it is straightforward that 
\begin{eqnarray*}
\Big|\frac{1}{m}\sum_{j=1}^{m}e^{-ix_j}\Big|  &=&  \Big|\frac{1}{m}\sum_{j=1}^{m}e^{-i(x_j-\frac{\theta}{2})}\Big|  \\
&\ge& \frac{1}{m}\sum_{j=1}^{m}\cos\bigg(x_j-\frac{\theta}{2}\bigg)  \\
&\ge&\cos\frac{\theta}{2}.
\end{eqnarray*}
Setting $r(\theta)=\cos\frac{\theta}{2}$, the concluions holds.
\end{proof}

In the rest of this paper, we always take
$$
r(\theta)=\cos\frac{\theta}{2}.
$$
 It is clear that  $0<r(\theta)< 1$ , and it is  continuous and  decreasing for  $\theta\in(0,\pi) $. 
\begin{proposition}\label{gap1}
	Given $\theta\in(0,\pi)$ and $c\in ( 0,1) $.  There exists a small $\Delta>0$ and  $\theta^{'}\in(\theta,\pi)$ such that for all $0<\delta <\Delta$, we have that 	
	$$(1+\delta)\theta \leq \theta^{'} \quad \textit{and } \quad  r(\theta^{'})\ge c\cdot r(\theta).
	$$
\end{proposition}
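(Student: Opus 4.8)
The plan is to exploit that $r(\theta)=\cos(\theta/2)$ is continuous and strictly decreasing on $(0,\pi)$ with range $(0,1)$, so that the whole statement reduces to a one-line application of the intermediate value theorem. First I would observe that since $c\in(0,1)$ and $0<r(\theta)<1$, we have $0<c\,r(\theta)<r(\theta)$. In particular $c\,r(\theta)$ lies strictly between $r(\theta)$ and $\lim_{t\to\pi^-}r(t)=\cos(\pi/2)=0$, hence by continuity there is a (unique) $\theta''\in(\theta,\pi)$ with $r(\theta'')=c\,r(\theta)$.

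Next I would fix any $\theta'\in(\theta,\theta'')$. Since $r$ is decreasing, $r(\theta')>r(\theta'')=c\,r(\theta)$, so in particular $r(\theta')\ge c\,r(\theta)$, and clearly $\theta<\theta'<\pi$ as required. (One could equally well take $\theta'=\theta''$ itself, in which case $r(\theta')=c\,r(\theta)\ge c\,r(\theta)$; either choice works.)

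Finally, with $\theta'$ fixed, set $\Delta=\theta'/\theta-1$, which is positive because $\theta'>\theta>0$. Then for every $\delta$ with $0<\delta<\Delta$ we have $(1+\delta)\theta<(1+\Delta)\theta=\theta'$, so both $(1+\delta)\theta\le\theta'$ and $r(\theta')\ge c\,r(\theta)$ hold, which is exactly the assertion.

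I do not expect any genuine obstacle here: the argument is just a continuity/IVT computation using the explicit monotone formula $r(\theta)=\cos(\theta/2)$. The only point needing a little care is that the two constraints on $\theta'$ — that it stay below $\pi$ and that $r(\theta')$ stay above $c\,r(\theta)$ — are simultaneously satisfiable, and this is precisely what the continuity of $r$ together with the strict inequality $c\,r(\theta)<r(\theta)$ guarantees.
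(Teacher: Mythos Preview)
Your proof is correct and is exactly the argument the paper has in mind: the paper's own proof is the single sentence ``Since $r(\theta)$ is continuous and decreasing for $\theta\in(0,\pi)$, the conclusion holds,'' and your write-up simply spells out how continuity and monotonicity yield $\theta'$ and then $\Delta=\theta'/\theta-1$.
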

\begin{proof}
Since $r(\theta)$ is continuous and decreasing for  $\theta\in ( 0,\pi) $, the conclusion holds. 
\end{proof}

Next, we are ready to prove  Theorem~\ref{gap-condition}.
\begin{proof}[Proof of Theorem~\ref{gap-condition}]
Since $\{( N_n,B_n)\}_{n=1}^\infty $ satisfies remainder bounded condition(RBC),  by Theorem \ref{existence},  the infinite convolution exists. Note that $\{(N_{n_k},B_{n_k})\}_{k=1}^\infty  $ satisfies partial concentration condition (PCC),  by Theorem~\ref{equi-positive}, it is sufficient to show that $\{\nu_{>n_k-1}\}_{k=K}^\infty$ is an equi-positive family for some large integer $K$.

Recall that for each $k>1$,
	\[\nu_{>n_k-1}=\delta_{N_{n_k}^{-1}B_{n_k}}\ast\delta_{(N_{n_k}N_{n_k+1})^{-1}B_{n_k+1}}\ast\dots\ast\delta_{(N_{n_k}N_{n_k+1}\dots N_{n_k+j})^{-1}B_{n_k+j}}\ast\cdots,\]
and
\[
{\hat{\nu}}_{>n_k-1}(\xi)=\prod_{j=0}^{\infty}M_{B_{n_k+j}}\left(\frac{\xi}{N_{n_k}N_{n_k+1}\cdots N_{n_k+j}}\right)=M_{B_{n_k}}(\xi_0)M_{B_{n_{k+1}}}(\xi_1)\prod_{j=2}^{\infty}M_{B_{n_k+j}}(\xi_j),
\]	
where $\xi_j=\frac{\xi}{N_{n_k}N_{n_k+1}\dots N_{n_k+j}}$ for each $j\geq 0$.
The key is to give appropriate estimate for the lower bounds of $|M_{n_k}(\xi_0)|$, $|M_{n_k+1}(\xi_1)|$ and $\prod_{j=2}^{\infty}|M_{n_k+j}(\xi_j)|$, respectively.	

For all  $0<\delta<\frac{1}{4}$ and $\xi\in[-\frac{1}{2}-\delta,\frac{1}{2}+\delta]$,  it is obvious that
	\[|\xi_j|\le \frac{1}{N_{n_k+j}}\frac{| \xi| }{2^{j}}\le \frac{1}{N_{n_k+j}}\frac{3}{2^{j+2}}.\]

First, we estimate $\prod_{j=2}^{\infty}|M_{n_k+j}(\xi_j)|$.
		Let $B_n^{'}=B_n\pmod{N_n}$ for all $n\ge1$. It's clear that $B_n^{'}\subset \{0,1,\cdots,N_n-1\}.$	For all $b\in B_{n_k+j}^{'}$, we have that  $| 2\pi b \xi_j | \in [0,\frac{3\pi}{2^{j+1}}]$. This implies that  for $j\ge2$,
	\begin{align*}
		\left|M_{B_{n_k+j}}(\xi_j)\right|
		&=\Big|\frac{1}{\#B_{n_k+j}}\sum_{b\in B_{n_k+j}}e^{-2\pi i b\xi_j}\Big|\\
		&\ge\Big|\frac{1}{\#B_{n_k+j}}\sum_{b\in B_{n_k+j}^{'}}e^{-2\pi i b\xi_j}\Big|-\Big|\frac{1}{\#B_{n_k+j}}\sum_{b\in B_{n_k+j}^{'}}e^{-2\pi i b\xi_j}-\frac{1}{\#B_{n_k+j}}\sum_{b\in B_{n_k+j}}e^{-2\pi i b\xi_j}\Big|\\
		&\ge \Big|\frac{1}{\#B_{n_k+j}}\sum_{b\in B_{n_k+j}^{'}}\cos2\pi b \xi_j\Big|-\frac{2\#B_{n_k+j,2}}{\#B_{n_k+j}}\\
		&\ge \cos\frac{3\pi}{2^{j+1}}-\frac{2\#B_{n_k+j,2}}{\#B_{n_k+j}}.
	\end{align*}

	Since $\{(N_n, B_n)\}_{n=1}^\f$ satisfies RBC, i.e.  $\sum_{n=1}^{\infty} \frac{\# B_{n,2}}{\# B_n} < \infty$, combining it with $\sum_{j=2}^{\f}( 1-\cos\frac{3\pi}{2^{j+1}})<\infty$,	we have
	\[\lim_{j\to \f}\cos\frac{3\pi}{2^{j+1}}-\frac{2\#B_{n_k+j,2}}{\#B_{n_k+j}}-1=0,\]
	for all $k\in \N$, and it implies that
	\begin{eqnarray}\label{7}
		\lim_{j\to \f}\frac{\ln ( \cos\frac{3\pi}{2^{j+1}}-\frac{2\#B_{n_k+j,2}}{\#B_{n_k+j}}) }{\cos\frac{3\pi}{2^{j+1}}-\frac{2\#B_{n_k+j,2}}{\#B_{n_k+j}}-1}=1.
	\end{eqnarray}
Hence there exist constants $K_1>0$ and $C>0$ such that for all $n>K_1$ and $k>K_1$,
	\[\frac{2\#B_{n,2}}{\#B_n}<\cos\frac{3\pi}{8},\]
		\[\sum_{j=2}^{\f}\ln \left( \cos\frac{3\pi}{2^{j+1}}-\frac{2\#B_{n_k+j,2}}{\#B_{n_k+j}}\right)>-C.\]
Therefore,  for $j\geq 2$,  we obtain the lower bound
	\begin{equation}\label{MB>1}
		\prod_{j=2}^{\infty}\left|M_{n_k+j}(\xi_j)\right|\ge\prod_{j=2}^{\f}\left( \cos\frac{3\pi}{2^{j+1}}-\frac{2\#B_{n_k+j,2}}{\#B_{n_k+j}}\right)>e^{-C},	
	\end{equation}
	for all $k>K_1.$
	
Next, we estimate $|M_{n_k+1}(\xi_1)|$. Since $\sum_{n=1}^{\infty} \frac{\# B_{n,2}}{\# B_n} < \infty,$
	there exists $K_2\in \N$ such that for all $k>K_2$,
	\[\frac{\#B_{n_k+1,2}}{\#B_{n_k+1}}<\frac{1}{4}\cdot r\Big(\frac{3\pi}{4}\Big).\]
	Thus, by Proposition \ref{gap},
	\begin{equation}\label{MB=1}
		\begin{split}
			&\left|M_{B_{n_k+1}}(\xi_1)\right|
			=\Big|\frac{1}{\#B_{n_k+1}}\sum_{b\in B_{n_k+1}}e^{-2\pi i b\xi_1}\Big|\\
			&\ge\Big|\frac{1}{\#B_{n_k+1}}\sum_{b\in B_{n_k+1}^{'}}e^{-2\pi i b\xi_1}\Big|-\Big|\frac{1}{\#B_{n_k+1}}\sum_{b\in B_{n_k+1}^{'}}e^{-2\pi i b\xi_1}-\frac{1}{\#B_{n_k+1}}\sum_{b\in B_{n_k+1}}e^{-2\pi i b\xi_1}\Big|\\
			&\ge r\Big(\frac{3\pi}{4}\Big)-\frac{2\#B_{n_k+1,2}}{\#B_{n_k+1}}\\
			&=\frac{1}{2}\cdot r\Big(\frac{3\pi}{4}\Big),
		\end{split}
	\end{equation}
	for all $k>K_2$.
	
Finally, we need accurate estimate for $M_{B_{n_k}}(\xi_0)$. Since $\{(N_{n_k},B_{n_k})\}_{k=1}^\infty  $ satisfies  PCC, which contains two different cases, we need to consider both cases respectively.

Case(i),  there exists $\{(b_{n_k,1},b_{n_k,2}):b_{n_k,1},b_{n_k,2}\in[0,N_{n_k}-1]\}_{k=1}^\infty$ such that $0<\frac{b_{n_k,2}-b_{n_k,1}}{N_{n_k}}<l$ and $\sum_{k=1}^{\f}\frac{\# B_{n_k,2}^{l}}{\# B_{n_k}}<\f ,$	where
$B_{n_k,1}^{l}=B_{n_k}\cap [b_{n_k,1},b_{n_k,2}]$, $B_{n_k,2}^{l}=B_{n_k}\backslash B_{n_k,1}^{l}$.

Since $0<\pi l<\pi$ and $r(\theta)<1$,  we have that  $r(\pi l)<1$ and  $\frac{2}{4-r(\pi l)} <1$. by Proposition \ref{gap1}, there exists $\delta'_1>0$ and  $l'\in(l,1)$ such that  $0<\frac{b_{n_k,2}-b_{n_k,1}}{N_{n_k}}(1+\delta) <l'$ and $r(\pi l')\ge \frac{2}{4-r(\pi l)}\cdot r(\pi l),$ for all $0<\delta<\delta'_1$.
For all $k\in\N$, by  Proposition \ref{gap}, we have
\[\Big|\frac{1}{\# B_{n_k,1}^{l}}\sum_{b\in B_{n_k,1}^{l}}e^{-2\pi i b\xi_0}\Big|=
	\Big|\frac{1}{\# B_{n_k,1}^{l}}\sum_{b\in B_{n_k,1}^{l}}e^{2\pi (b-b_{n_k,1})\xi_0i}\Big|\ge r(\pi l'),\]
for all $ \xi\in[-\frac{1}{2}-\delta,\frac{1}{2}+\delta]$, where $0<\delta<\delta_1'$ and $\xi_0=\frac{\xi}{N_{n_k}}$.

   Since $\sum_{k=1}^{\f}\frac{\# B_{n_k,2}^{l}}{\# B_{n_k}}$ converges, there exists $K'_3>0$ such that for all $k>K'_3$,
\[
\frac{\# B_{n_k,2}^{l}}{\# B_{n_k}}<\frac{1}{4}\cdot r(\pi l) \quad \text{and} \quad \frac{\# B_{n_k,1}^{l}}{\# B_{n_k}}>1-\frac{1}{4}\cdot r(\pi l) .
\]
Thus,  for all $k>K'_3$, by Proposition \ref{gap}, we obtain that
\begin{equation}\label{=0}
   	\begin{split}
   	\left|M_{B_{n_k}}(\xi_0)\right|
   	&\ge\Big|\frac{1}{\#B_{n_k}}\sum_{b\in B_{n_k,1}^{l}}e^{-2\pi i b\xi_0}\Big|-\Big|\frac{1}{\#B_{n_k}}\sum_{b\in B_{n_k,1}^{l}}e^{-2\pi i b\xi_0}-\frac{1}{\#B_{n_k}}\sum_{b\in B_{n_k}}e^{-2\pi i b\xi_0}\Big|\\
   	&\ge\Big(\frac{\# B_{n_k,1}^{l}}{\# B_{n_k}}\Big)\cdot r(\pi l')-\frac{\# B_{n_k,2}^{l}}{\# B_{n_k}}\\
   	&\ge \Big(1-\frac{1}{4}\cdot r(\pi l)\Big)\cdot\frac{2}{4-r(\pi l)}\cdot r(\pi l)-\frac{1}{4}\cdot r(\pi l)\\
   	&=\frac{1}{4}\cdot r(\pi l),
   	\end{split}
   \end{equation}
for all $ \xi\in[-\frac{1}{2}-\delta,\frac{1}{2}+\delta]$, where $0<\delta<\delta_1'$, and $\xi_0=\frac{\xi}{N_{n_k}}$.


Case(ii), for $\{(N_{n_k},B_{n_k})\}_{k=1}^\infty$, there exists $c\in(0,1]$ and a sequence $\{(b_{n_k,1},b_{n_k,2}):b_{n_k,1},b_{n_k,2}\in   [\frac{l}{2}N_{n_k},(1-\frac{l}{2})N_{n_k}]    \}_{n=1}^\infty $ such that $\frac{\# B_{n_k}^{l}}{\# B_{n_k}}\ge c,$ for all $k\in \N$, where $B_{n_k}^{l}=B_{n_k}\cap [b_{n_k,1},b_{n_k,2}].$

It is clear that there exists $\delta''_1>0$ and $0<l''<1$  such that for all  $\delta<\delta''_1$, for all $|\xi|\in[\frac{1}{4},\frac{1}{2}+\delta]$ and all $b\in B_{n_k}^{l}$,  we have that  $ 2\pi b \frac{\xi}{N_{n_k}}\in(\pi \frac{l''}{2},\pi-\frac{l''}{2},)$.   Since $\frac{\# B_{n_k}^{l}}{\# B_{n_k}}\ge c$  and $B'_{n_k}= B_{n_k}\pmod{N_{n_k}}$,
$$
\# B'_{n_k}\setminus \{b\in B'_{n_k}:2\pi b \frac{\xi}{N_{n_k}}\in[0,\pi]\}\le (1-c) \cdot\# B_{n_k},
$$
for all  $\delta<\delta''_1$ and all $|\xi|\in[\frac{1}{4},\frac{1}{2}+\delta]$. It is also true that  $2\pi b \frac{\xi}{N_{n_k}}\in(0,\pi+2\pi\delta)$ for all $|\xi|\in[\frac{1}{4},\frac{1}{2}+\delta]$ and all $b\in B'_{n_k}$. Thus there exists $\delta'''_1\in(0,\delta''_1)$ such that $c_\delta=c\sin\frac{\pi l''}{2}+(1-c)\sin(\pi+2\pi\delta)>0$ for all $0<\delta<\delta'''_1$.

Therefore the elements of $B'_{n_k}$ is classified into three groups according to the location of $ 2\pi b \frac{\xi}{N_{n_k}}$, for all $|\xi|\in[\frac{1}{4},\frac{1}{2}+\delta]$ and all $0<\delta<\delta'''_1$:
\begin{eqnarray*}
A_1&=&\{b\in B'_{n_k}: 2\pi b \frac{\xi}{N_{n_k}}\in[0,\frac{\pi l''}{2}]\cup[\pi-\frac{\pi l''}{2},\pi]\} ,  \\
A_2&=&\{b\in B'_{n_k}: 2\pi b \frac{\xi}{N_{n_k}}\in(\frac{\pi l''}{2},\pi-\frac{\pi l''}{2})\},  \\
A_3&=&\{b\in B'_{n_k}: 2\pi b \frac{\xi}{N_{n_k}}\in(\pi, \pi+2\pi\delta]\}.
\end{eqnarray*}
Note that $\# A_2\geq \# B_{n_k}^{l}\geq c\# B_{n_k}$, $\# A_3\leq (1-c) \cdot\# B_{n_k}$. Immediately, we have the following estimate
\begin{eqnarray*}
\Big|\frac{1}{\#B_{n_k}}\sum_{b\in B'_{n_k}}\sin2\pi  b\frac{\xi}{N_{n_k}}\Big| &=&\frac{1}{\#B_{n_k}}\Big|\sum_{b\in A_1}\sin2\pi  b\frac{\xi}{N_{n_k}}+\sum_{b\in A_2}\sin2\pi  b\frac{\xi}{N_{n_k}}+\sum_{b\in A_3}\sin2\pi  b\frac{\xi}{N_{n_k}}\Big| \\
&\geq &\frac{1}{\#B_{n_k}}\Big|\sum_{b\in A_2}\sin2\pi  b\frac{\xi}{N_{n_k}}+\sum_{b\in A_3}\sin2\pi  b\frac{\xi}{N_{n_k}}\Big| \\
&\geq &     \frac{1}{\#B_{n_k}}\Big|c\# B_{n_k} \sin\frac{\pi l''}{2}+(1-c) \cdot\# B_{n_k}\sin(\pi+2\pi\delta)\Big|           \\
&=&c_\delta,
\end{eqnarray*}
for all $|\xi|\in[\frac{1}{4},\frac{1}{2}+\delta]$ and all $0<\delta<\delta'''_1$.

Recall that $\xi_0=\frac{\xi}{N_{n_k}}$, we have
 \begin{align*}
 \big|M_{B_{n_k}}\left(\xi_0\right)\big|
 &\ge\Big|\frac{1}{\#B_{n_k}}\sum_{b\in B'_{n_k}}e^{-2\pi i b\frac{\xi}{N_{n_k}}}\Big|-\Big|\frac{1}{\#B_{n_k}}\sum_{b\in B'_{n_k}}e^{-2\pi i b\frac{\xi}{N_{n_k}}}-\frac{1}{\#B_{n_k}}\sum_{b\in B_{n_k}}e^{-2\pi i b\frac{\xi}{N_{n_k}}}\Big|\\
 &\ge \Big|\frac{1}{\#B_{n_k}}\sum_{b\in B'_{n_k}}\sin2\pi  b\frac{\xi}{N_{n_k}}\Big|-\frac{2\#B_{n_k,2}}{\#B_{n_k}} \\
 &\ge c_\delta -\frac{2\#B_{n_k,2}}{\#B_{n_k}},\\	
 \end{align*}
for all $|\xi|\in[\frac{1}{4},\frac{1}{2}+\delta]$, where $\delta<\delta'''_1$. It is clear that $2\pi i b\frac{\xi}{N_{n_k}}\in[0,\frac{\pi}{2}]$ for all $|\xi|\in[0,\frac{1}{4}]$ and all $b\in B'_{n_k}$, and by Proposition \ref{gap}, it follows that
     \begin{align*}
  	\big|M_{B_{n_k}}\left(\xi_0\right)\big|
  	&\ge\Big|\frac{1}{\#B_{n_k}}\sum_{b\in B'_{n_k}}e^{-2\pi i b\frac{\xi}{N_{n_k}}}\Big|-\Big|\frac{1}{\#B_{n_k}}\sum_{b\in B'_{n_k}}e^{-2\pi i b\frac{\xi}{N_{n_k}}}-\frac{1}{\#B_{n_k}}\sum_{b\in B_{n_k}}e^{-2\pi i b\frac{\xi}{N_{n_k}}}\Big|\\
  	&\ge r(\frac{\pi}{2})-\frac{2\#B_{n_k,2}}{\#B_{n_k}},
  \end{align*}
for all $|\xi|\in[0,\frac{1}{4}]$.

Let $\epsilon'_\delta=\frac{1}{3}\min\{c_\delta,r(\frac{\pi}{2})\}>0$. Since $\sum_{n=1}^{\infty} \frac{\# B_{n,2}}{\# B_n} < \infty$, there exists $K''_3>0$ such that for all $k>K''_3$, $\frac{\#B_{n_k,2}}{\#B_{n_k}}<\epsilon'_\delta.$ It follows that
    \begin{equation}\label{=0_1}
	    \big|M_{B_{n_k}}(\xi_0)\big|\ge \epsilon'_\delta,
    \end{equation}
for all $k \ge K''_3$ and all $\xi \in [-\frac{1}{2}-\delta,\frac{1}{2}+\delta]$, where $\delta<\delta'''_1$.

Therefore, by choosing  $K_3=\max\{K_3',K_3'' \}$, $\epsilon_\delta=\min\{\epsilon_\delta', \frac{1}{4} r(\theta)\}>0$ and $\delta_1=\min\{\delta'_1,\delta'''_1\}$,  we  have that
\begin{equation}\label{MB0}
    	\big|M_{B_{n_k}}(\xi_0)\big|\ge \epsilon_\delta,
\end{equation}
for all $k \ge K_3$ and all $\xi \in [-\frac{1}{2}-\delta,\frac{1}{2}+\delta]$, where $\delta<\delta_1$.

Finally, by combining \eqref{MB>1}, \eqref{MB=1} and \eqref{MB0} together,  let
\[
\delta_0 =\min\set{\frac{1}{4},\delta_1}, \quad \epsilon_0=\epsilon_{\delta_0}\cdot\frac{1}{2}\cdot r\left(\frac{3\pi}{4}\right)\cdot e^{-C'},\  K=\max\set{K_1,K_2,K_3},
\]
and  for all $\xi\in[-\frac{1}{2}-\delta_0,\frac{1}{2}+\delta_0]$ and all $k>K$, we obtain
\[
|\hat{\nu}_{>n_k-1}(\xi)|=|M_{n_k}(\xi_0)||M_{n_k+1}(\xi_1)|\prod_{j=2}^{\infty}|M_{n_k+j}(\xi_j)|\ge\epsilon_0.
\]
Hence, for all $k>K$ and all $x\in[0,1)$, setting $k_x=0$ for $x\in[0,\frac{1}{2})$ and $k_x=-1$ for $x\in[\frac{1}{2},1)$, it follows that
    \[
    |\hat{\nu}_{>n_k-1}(x+y+k_x)|\ge\epsilon_0,
    \]
all $|y|<\delta_0$.  This implies that  $\{\nu_{>n_k-1}:k>K\}$ is an equi-positive family. By Theorem \ref{equi-positive}, the infinite convolution $\mu$ is a spectral measure.
\end{proof}

Finally, we give the proof of Theorem \ref{infinitely many admissible pairs}.
\begin{proof}[Proof of Theorem \ref{infinitely many admissible pairs}]
	 Assume the subsequence $\{(N_{n_k},B_{n_k})\}_{k=1}^\infty $ consists of general consecutive sets. For all $k\ge1$, let $B'_{n_k}=B_{n_k}\pmod{N_{n_k}}$. Then $B'_{n_k}=\{0,1,\dots,\# B_{n_k}-1\}$. Note that $\# B_{n_k}|N_{n_k}$ since $(N_{n_k},B_{n_k}) $ is  an admissible pair.
	
	 If $\#\{k\in\N:\# B_{n_k}=N_{n_k}\}=\f$, we may assume the subsequence $\{(N_{n_k},B_{n_k})\}_{k=1}^\infty $ consists of general consecutive sets satisfying $\# B_{n_k}=N_{n_k}$ for all $k\in\N$. Choosing $l=\frac{1}{2}$, we have
	 \[\frac{\#\{b\in B'_{n_k}:b\in[\frac{N_{n_k}}{4},\frac{3N_{n_k}}{4}]\}}{N_{n_k}}>\frac{1}{8},\]
	 for all $k\geq 1$. 	 Since $\{(N_n,B_n)\}_{n=1}^\infty $ satisfies  RBC, we have that $\sum_{n=1}^{\infty} \frac{\# B_{n,2}}{\# B_n} < \infty$. For  sufficiently large $k$, we always have
	 \[
        \frac{\#B_{n}^{l}}{\#B_{n_k}}\ge\frac{\#\{b\in B'_{n_k}:b\in[\frac{N_{n_k}}{4},\frac{3N_{n_k}}{4}]\}}{N_{n_k}}-\frac{\# B_{n_k,2}}{\# B_{n_k}}>\frac{1}{16}.
      \]
Therefore $\{(N_{n_k},B_{n_k})\}_{k=1}^\infty $ has a subsequence satisfying  PCC.
	
	 If $\#\{k\in\N:\# B_{n_k}<N_{n_k}\}=\f$, we may assume the subsequence $\{(N_{n_k},B_{n_k})\}_{k=1}^\infty $ consists of general consecutive sets satisfying $\# B_{n_k}<N_{n_k}$, for all $k\geq 1$. Obviously $\#B_{n_k}\le\frac{N_{n_k}}{2}$. 	
Choosing $l=\frac{1}{2}$ and $(b_{n_k,1},b_{n_k,2})=(0,\frac{N_{n_k}}{2})$, for all $k\geq 1$, we have
	 $B_{n_k,1}^{l}=B_{n_k}\cap [0,\frac{N_{n_k}}{2}]$ and  $B_{n_k,2}^{l}=B_{n_k}\backslash B_{n_k,1}^{l}=B_{n_k,2}$.
Since $\{(N_n,B_n)\}_{n=1}^\infty $ satisfies  RBC, we have
\[
\sum_{k=1}^{\f}\frac{\# B_{n_k,2}^{l}}{\# B_{n_k}}=\sum_{k=1}^{\infty} \frac{\# B_{n_k,2}}{\# B_{n_k}}<\f,
\]
which implies that $\{(N_{n_k},B_{n_k})\}_{k=1}^\infty $ satisfies  PCC.
	
Hence $\{(N_n,B_n)\}_{n=1}^\infty $ always has a subsequence satisfying   PCC, by Theorem \ref{gap-condition}, the infinite convolution $\mu$ is a spectral measure.
\end{proof}

\section{spectral measures generated by special subsequences}\label{sec_fm}

In this section, we prove Theorem \ref{finitely many admissible pairs-1} and Corollary \ref{finitely many admissible pairs}.
\begin{proof}[Proof of Theorem \ref{finitely many admissible pairs-1}]
Since the subsequence $\{(N_{n_k},B_{n_k})\}_{k=1}^\f$ of $\{( N_n,B_n)\}_{n=1}^\infty $  satisfies   RBC, and the rest $\{( N_n,B_n)\}_{n\notin\{n_k\}} $ satisfies   DBC, by Corollary \ref{existence_1}, the infinite convolution exists. It remains to show that $\mu$ is a spectral measure.

Since $\{( N_n,B_n)\}_{n\notin\{n_k\}} $ satisfies  DBC, i.e.
\[
\sup_{n\notin{n_k}}\{N_n^{-1}|b|, \ b\in B_n\}<\f \quad \text{and} \quad \sup_{n\notin{n_k}} \# B_n<\f,
\]
there exists a real $N>0$ such that $\sup\{N_n^{-1}b :  \ b\in B_n, n\notin\{n_k\}_{k=1}^\f\}\leq N$. If $N<1$,  we have that $B_{n,2}=\emptyset$, and it implies that $\{( N_n,B_n)\}_{n=1}^\infty $ satisfies   RBC since $\{(N_{n_k},B_{n_k})\}_{k=1}^\f$ satisfies  RBC and   PCC. By Theorem \ref{gap-condition},  $\mu$ is a spectral measure, and the conclusion holds.

For  $N\ge 1$, we need to consider the number of elements  such that $N_{n_k}\geq 2N$ in the subsequence $\{( N_{n_k},B_{n_k})\}_{k=1}^\infty $,  and we divide it into two cases.

Case (i): there are only finitely many elements such that $N_{n_k}\ge 2N$. Hence there exists $M>0$ such that $N_{n_k}<2N$ for all $k>M$. Since the sequence $\{( N_n,B_n)\}_{n\notin\{n_k\}} $ satisfying RBC implies that $\sum_{k=1}^{\infty} \frac{\# B_{n_k,2}}{\# B_{n_k}} < \infty,$ there exists $M'>M$ such that $B_{n_k,1}=B_{n_k}$, for all $k>M'$. Otherwise
	\[\sum_{k=1}^{\infty} \frac{\# B_{n_k,2}}{\# B_{n_k}}\ge\sum_{k>M, B_{n_k,1}\ne B_{n_k}}   \frac{1}{2N} = \infty,\]
	this contradicts the fact that $\{(N_{n_k}B_{n_k})\}$  satisfies   RBC. Therefore, $\{(N_{n},B_{n})\}_{n>n_{M'}}$ satisfies  DBC.

For all $k>M'$, we have either  $N_{n_k}>\#B_{n_k}$ or $B_{n_k}=\{0,1,\dots,N_{n_k}-1\}$ , it follows that $\# \{n>n_{M'}:N_n>\# B_n\}=\f$ or $\gcd_{n>n_k}\{\gcd (B_n-B_n)\}=1$.
	By Theorem \ref{finitely many admissible pairs 1}, there exists a subsequence $\{(\nu_{>n_j})\}_{j=1}^\infty$ which is an equi-positive family, and by Theorem \ref{equi-positive}, $\mu$ is a spectral measure.

Case (ii):  there are infinitely many elements such that $N_{n_k}\ge 2N$  in the subsequence $\{( N_{n_k},B_{n_k})\}_{k=1}^\infty $. There exists $M>0$ such that for all $k>M$, we have $B_{n_k,1}=B_{n_k}$ if $N_{n_k}<2N$. Otherwise for all $l\in\N$, there exists an integer $k_l> l$ such that $N_{n_{k_l}}<2N$ and $B_{n_{k_l},2}\ne \emptyset$, This implies that
\[\sum_{k=1}^{\infty} \frac{\# B_{n_k,2}}{\# B_{n_k}}\ge\sum_{l=1}^\f \frac{\# B_{n_{k_l},2}}{\# B_{n_{k_l}}} \ge \sum_{l=1}^\f\frac{1}{2N} = \infty,\]
which contradicts the fact that $\{(N_{n_k}B_{n_k})\}$  satisfies   RBC.

Therefore, there exists a subsequence $\{(N_{m_k},B_{m_k})\}_{k=1}^\f $ of $\{(N_{n_k},B_{n_k})\}_{k=M}^\f $  satisfies   RBC and  PCC such that  $N_{m_k}\ge 2N$ for all $k>1$, and $N_n<2N$ for all $n\in \{n_k\}_{k=M+1}^\f\backslash \{m_k\}_{k=1}^\f$.

	Similar to the proof of Theorem \ref{gap-condition}, we estimate the Fourier transform of $\nu_{>m_k-1}$, that is,
\[
|\hat{\nu}_{>m_k-1}(\xi)|=|M_{m_k}(\xi_0)|\prod_{j=1}^{\infty}|M_{m_k+j}(\xi_j)|.
\]
By the same argument as  \eqref{=0} and \eqref{=0_1}, there exist  $K'>0$, $\epsilon_1>0$ and $\delta_1>0$, such that
$$
\big|M_{B_{m_k}}(\xi_0)\big|\ge \epsilon_1,
$$
for all $\xi\in[-\frac{1}{2}-\delta_1,\frac{1}{2}+\delta_1]$ and $k>K'$.

Next, we estimate $\prod_{j=1}^{\infty}|M_{m_k+j}(\xi_j)|$.
Since $N_{n_k}<2N$ for  all  $n_k \in \{n_k\}_{k=M+1}^\f\backslash \{m_k\}_{k=1}^\f$, we have $B_{n_k,1}=B_{n_k}$ which implies that $N_{n_k}^{-1}b<1$ for all $b\in B_{n_k}$. Combining it with $\sup_{n\notin\{n_k\}_{k=1}^\f}\{N_n^{-1}b, \ b\in B_n\}\le N$, it follows that

$$
\sup_{n\notin\{m_k\}_{k=1}^\f \ , \ n>n_M}\{N_n^{-1}b, \ b\in B_n\}\le N.
$$
We write that
$$
	B''_n=\left\{
	\begin{array}{ll}
		\ B_n\pmod{N_n} & \text{if} \ n\in\{m_k\}_{k=1}^\infty ;\\
		\ B_n & \text{if} \ n\notin\{m_k\}_{k=1}^\infty,
	\end{array}  \right.
$$
	and it is clear that $N\ge \max\{\frac{b}{N_n}:b\in B''_n\}$ for all $n>n_M$. Hence
	\[\sum_{n>n_M}\frac{\# B_n\backslash B''_n}{\#B_n}=\sum_{k=1}^{\f}\frac{\# B_{{m_k},2}}{\#B_{m_k}}<\f.\]	
For all $ \xi\in[-\frac{3}{4},\frac{3}{4}]$ and $m_k>n_M$, setting $\xi_j=\frac{\xi}{N_{m_k}N_{m_{k+1}}\cdots N_{m_{k+j}}}$, $j=1,2,\ldots$, we have
	\begin{align*}
		|2\pi b \xi_j|
		&\le\Big| \frac{2\pi N\xi}{2NN_{m_{k+1}}\cdots N_{m_{k+j-1}}}\Big|\\
		&\le \frac{3\pi}{2^{j+1}},
	\end{align*}
	for all $b\in B''_{m_{k+j}}$.
	By the same argument as  \eqref{MB>1}, there exist $K''>0$ and $\epsilon_2>0$ such that
	\[\prod_{j=1}^{\infty}|M_{m_k+j}(\xi_j)| >\epsilon_2,\]
	for all $\xi\in[-\frac{3}{4},\frac{3}{4}]$ and $k>K''$.
	
Let
	\[\epsilon_0=\epsilon_1\cdot\epsilon_2,\quad K=\max\set{K',K''},\quad 0<\delta_0< \min\set{\delta_1,\frac{1}{4}} .\]
We obtain that   	\[|\hat{\nu}_{>n_k-1}(\xi)|=|M_{m_k}(\xi_0)|\prod_{j=1}^{\infty}|M_{m_k+j}(\xi_j)|\ge\epsilon_0,\]
for all $\xi\in[-\frac{1}{2}-\delta_0,\frac{1}{2}+\delta_0]$ and $k>K$.

Setting $k_x=0$ for $x\in[0,\frac{1}{2})$ and $k_x=-1$ for $x\in[\frac{1}{2},1)$,  we have
	\[|\hat{\nu}_{>m_k-1}(x+y+k_x)|\ge\epsilon_0,\]
for all $k>K$, $x\in[0,1)$ and $|y|<\delta_0$.	
Hence $\{\nu_{>m_k-1}:k>K\}$ is an equi-positive family, and by Theorem \ref{equi-positive}, the infinite convolution $\mu$ is a spectral measure.
\end{proof}

\begin{proof}[Proof of Corollary \ref{finitely many admissible pairs}]
By the proof of Theorem \ref{infinitely many admissible pairs}, the subsequence $\{( N_{n_k},B_{n_k})\}_{k=1}^\infty $ of general consecutive sets satisfying   RBC implies that $\{( N_{n_k},B_{n_k})\}_{k=1}^\infty$ has a subsequence satisfying   PCC. By Theorem~\ref{finitely many admissible pairs-1}, the infinite convolution $\mu$ is a spectral measure.
\end{proof}

\section{ Examples}\label{sec_ex}

In this section, we give some examples to illustrate our main conclusions. The first example shows that the infinite convolution of admissible pairs may not exist.
\begin{example}\label{not converge}
For each integer $n\geq 1$, let $N_n =2$ and $B_n=\set{ 0,2^n+1 }.$ Then $B_n$ is a general consecutive set. By choosing $L_n=\set{ 0,1 }$, it is clear that $\{( 2,B_n)\}_{n=1}^\infty $ is a sequence of admissible pairs. Let $\mu_n$ be given by \eqref{def_mun}.

By Theorem \ref{existence}, since $B_n\setminus[-2^{n},2^{n}]=\{2^n+1\}$ and
    \begin{equation*}
    	\sum_{n=1}^{\infty}\frac{\# B_n\setminus[0,2^{n}]}{\# B_n}=\sum_{n=1}^{\infty}\frac{1}{2}=\f,
    \end{equation*}
the sequence $\{\mu_n\}$ does not converge weakly, that is,  the infinite convolution $\mu$ does not exist.
\end{example}

It happens that infinite convolutions are  not spectral measures. In the next example, we show that the infinite convolutions are not spectral measures even if the admissible pairs are general consecutive sets.
\begin{example}\label{ex_NE}
For each integer $n\geq 1$, let $N_n =2$ and $B_n=\set{ 0,1 } \text{or} \set{0,3}
.$ It is clear that  $B_n$ is a general consecutive set, and $\{( 2,B_n)\}_{n=1}^\infty $ is a sequence of admissible pairs.  Then the infinite convolution $\mu$ is not a spectral measure if and only if $$0<\#\{n\in\N:B_n=\set{ 0,1 }\}<\f.$$

If $\#\{n\in\N:B_n=\set{ 0,1 }\}=\f$, it contains a subsequence satisfying   RBC, and the rest satisfies  DBC. By Corollary \ref{finitely many admissible pairs}, $\mu$ is a spectral measure.

If  $\#\{n\in\N:B_n=\set{ 0,1 }\}=0$, the infinite convolution $\mu=\frac{1}{3}\mbb{L}|_{[0,3]}$, where $\mbb{L}|_{[0,3]}$ denotes the Lebesgue measure restricted on the interval $[0,3]$. Thus $\mu$ is a spectral measure.

If $0<\#\{n\in\N:B_n=\set{ 0,1 }\}<\f$, there exists $n_0\in\N$ such that $B_{n_0}=\set{ 0,1 }$ and $B_n=\set{ 0,3}$ for all $n>n_0$. Then $\nu_{>n_0-1}=\delta_{\frac{1}{2}\set{ 0,1 }}\ast\delta_{\frac{1}{4}\set{ 0,3 }}\ast\delta_{\frac{1}{8}\set{ 0,3 }}\ast\cdots=\frac{1}{3}\mbb{L}|_{[0,2]}+\frac{1}{3}\mbb{L}|_{[\frac{1}{2},\frac{3}{2}]}$ is absolutely continuous but not uniform on its support. For admissible pair $(N,B)$, we write $\rho(\cdot)=\nu_{>n_0-1}(N\cdot)$. Obviously, $\delta_{N^{-1}B}\ast\rho$ is still absolutely continuous but not uniform on its support. Thus $\mu$ is absolutely continuous but not uniform on its support. However, absolutely continuous spectral measures are  uniform distributed on their support, see \cite{Dutkay-Lai-2014} for details. Therefore, $\mu$ is not a spectral measure.

Note that $\{( 2,B_n)\}_{n=1}^\infty $ does not satisfy  PCC in the latter case since for all $n>n_0$ and $l\in (0,\frac{1}{2})$, $B^{l}_{n,2}=\{0,3\} \ \text{or} \ \{3\}$ and $B_n^l=\varnothing $.
\end{example}

Finally, we provide an example that the infinite convolution is a spectral measure, but it is not compactly supported.
\begin{example}\label{ex_ncpt}
Let $\{n_k\}_{k=1}^\infty $ be a strictly increasing sequence of positive integers. For each $n\geq 1$,  we write
$$
N_n=
    \begin{cases}
    	2(n+1)^2 & \text{if} \ n\in \{n_k\}_{k=1}^\infty , \\
    	9 & \text{if}  \ n\notin \{n_k\}_{k=1}^\infty ,
    \end{cases}
$$
and
$$
B_n=
    \begin{cases}
    	\set{ 0,1,\cdots, (n+1)^2-2, (n+1)^2 -1 + N_1 N_2 \cdots N_n } & \text{if} \ n\in \{n_k\}_{k=1}^\infty , \\
    	\set{0,1,5} & \text{if}  \ n\notin \{n_k\}_{k=1}^\infty .
    \end{cases}
$$
It is clear that $\{( N_n,B_n)\}_{n=1}^\infty $ is a sequence of admissible pairs by choosing
$$
L_n=
   \begin{cases}
	\set{ 0,2,\cdots, 2(n+1)^2-4, 2(n+1)^2 -2 } & \text{if} \ n\in\{n_k\}_{k=1}^\infty ,\\
	\set{0,3,6} & \text{if}  \ n\notin \{n_k\}_{k=1}^\infty ,
   \end{cases}
$$

Since  $B_{n,2}=\{(n+1)^2 -1 + N_1 N_2 \cdots N_n\}\ \text{or} \ \emptyset$, we have
\[
\sum_{n=1}^{\infty} \frac{\# B_{n,2}}{\# B_n} < \sum_{n=1}^{\infty} \frac{1}{(n+1)^2}<  \infty,
\]
and it implies that  $\{( N_n,B_n)\}_{n=1}^\infty $ satisfies   RBC.
Note that the subsequence of $\set{B_{n_k}}_{k=1}^\infty$ consists of general consecutive sets, by Theorem \ref{infinitely many admissible pairs}, the infinite convolution $\mu$ exists and is a spectral measure.

On the other hand,  the probability measure $\mu$ is not compactly supported  since $$ \sum_{k=1}^{\f} \frac{\max\set{b: b \in B_{n_k}}}{N_1 N_2 \cdots N_{n_k}} =\sum_{k=1}^{\f} \left( \frac{(k+1)^2 -1}{N_1 N_2 \cdots N_{n_k}} + 1 \right) =\f, $$
 and  $\mu([0,n])<1$  for all $n\in \N$.

Alternatively, $\{(N_{n_k},B_{n_k})\}_{k=1}^\infty$ satisfies   PCC, and by Theorem \ref{gap-condition} ,   $\mu$ is a spectral measure.
\end{example}



\section*{Acknowledgements}

The authors are grateful to the referee for his helpful comments.
The authors also wish to thank Prof. Lixiang An and Prof Xingang He for their helpful comments.

\end{document}